\documentclass[12pt,reqno]{amsproc}

\textwidth=465pt \evensidemargin=0pt \oddsidemargin=0pt
\marginparsep=8pt \marginparpush=8pt \textheight=650pt
\topmargin=-25pt

\setlength{\parskip}{2pt}
\usepackage{mathrsfs}

\usepackage{amsmath,amsthm,amssymb,wasysym,graphicx,mathtools}
\graphicspath{{./pics/}}
\usepackage[inline]{enumitem}

\usepackage{caption}
\usepackage{subcaption}

\numberwithin{equation}{section}
\theoremstyle{plain}

\newtheorem{theorem}{Theorem}[section]

\theoremstyle{definition}

\allowdisplaybreaks

\begin{document}\vspace*{-1cm}
\title{Starlikeness for Certain Close-to-Star Functions}

\author{R. Kanaga}
\address{Department of Mathematics \\National Institute of Technology\\Tiruchirappalli-620015,  India }
\email{kanagavennila75@gmail.com}

\author{V. Ravichandran}
\address{Department of Mathematics \\National Institute of Technology\\Tiruchirappalli-620015,  India }
\email{vravi68@gmail.com; ravic@nitt.edu}

\subjclass[2010]{30C80;  30C45}
\keywords{Univalent functions;    convex functions;  starlike functions; subordination; radius of starlikeness}

\thanks{The first author is supported by  the institute  fellowship from NIT Tiruchirappalli.}
\maketitle
\begin{abstract}
We find the radius of starlikeness of order $\alpha$, $0\leq \alpha<1$,
of normalized analytic functions $f$ on the unit disk  satisfying
either $\operatorname{Re}(f(z)/g(z))>0$
or $\left| (f(z)/g(z))-1\right|<1$ for some  close-to-star  function $g$
with $\operatorname{Re}(g(z)/(z+z^2/2))>0$ as well as of the class
of  close-to-star functions $f$ satisfying
$\operatorname{Re}(f(z)/(z+z^2/2))>0$.  Several other radii   such as
radius of univalence and  parabolic starlikeness  are shown to be the same as the
radius of starlikeness of appropriate order.
\end{abstract}

\section{Introduction}
Let $\mathbb {D}_{r} :=\{ z\in{\mathbb{C}}:\left|z\right| <r\}$ and  $\mathbb{D}:=\mathbb{D}_1$. Let $\mathcal{A}:=\{f:\mathbb{D}\to \mathbb{C}| f \mbox{ is analytic}, f(0)=f'(0)-1=0\}$ and let $ \mathcal{S}$ be its subclass consisting of univalent functions. For each $f\in\mathcal{S}$, we associate the function $s_{f}:\mathbb{D}\to  \mathbb{C}$ defined by $s_{f}(z)=zf'(z)/f(z)$. For $0\leq\alpha <1$, the class $\mathcal{S^{*}}(\alpha)$ of  functions starlike of order $\alpha$ is the subclass of $\mathcal{S}$ consisting of functions $f$ satisfying the inequality  $\operatorname{Re}(s_{f}(z))> \alpha$. The class $ \mathcal{S^{*}}:=\mathcal{S^{*}}(0)$ is the usual class of starlike functions. The image $k(\mathbb{D})$ of the Koebe function $k(z)=z/(1-z)^2$ is not convex but the image $k(\mathbb{D}_{2-\sqrt{3}})$ is convex and $2-\sqrt{3}$ is the largest such radius. This radius is known as the radius of convexity of the Koebe function. More generally, given a class of functions $\mathcal{F} $ and  another class  $\mathcal{G}$ characterised by a property $P$, the largest number $ R_{\mathcal{G}}(\mathcal{F})$ with $0\leq R_{\mathcal{G}}(\mathcal{F})\leq 1$ such that every function in $\mathcal{F}$
has the property $P$, in each disk $\mathbb {D}_{r}$ for each $r$ with  $0<r< R_{\mathcal{G}}(\mathcal{F}) $ is called the $\mathcal{G}$ radius of $\mathcal{F}$.  Kaplan \cite{keplan} introduced the class of close-to-convex functions $f$ satisfying $\operatorname{Re}(f'(z)/g'(z))> 0$ for some convex function $g$. In \cite{mac gregor1,mac gregor2}, MacGregor found the radius of starlikeness for the class of functions $f$ satisfying either $ \operatorname{Re}(f(z)/g(z))>0$ or $ \left|(f'(z)/g'(z))-1\right|<1$ for some $g\in \mathcal{S}$; related radius problems were discussed in \cite{jain,bajpai,Bhargava,Causey,Chen,Goel,Kowalczyk1,Kowalczyk2,Lecko,ratti1,Asha}. Reade \cite{reade} defined a function $f\in\mathcal{A}$, with $f(z)\neq 0$ for $z\in\mathbb{D}\setminus\{0\}$,  to be close-to-star if there exists a starlike function $g$ (not necessarily normalized) satisfying $\operatorname{Re}(f(z)/g(z))>0$.  The function $f(z)=z+z^2/2$ maps  $\mathbb{D}$ onto the domain bounded by the cardioid  $u+1/2=\cos t (1+\cos t )$ and $v=\sin t(1+\sin t)$ and therefore starlike in $\mathbb{D}$. This function $f$ also satisfies the inequality $|f'(z)-1|<1$ (which also implies univalence of $f$).  Using this starlike function, we introduce  the following three classes:
\begin{align*}
\mathcal {F}_{1}:&=\{ f\in{\mathcal{A}}:\operatorname{Re}(f(z)/g(z))>0,\quad\operatorname{Re}(g(z)/(z+z^2/2))>0\quad  \text{for some $g$ in }\mathcal{A}\} , \\
\mathcal {F}_{2}:&=\{ f\in{\mathcal{A}}:\left| (f(z)/g(z))-1\right|<1, \quad\operatorname{Re}(g(z)/(z+z^2/2))>0\quad  \text{for some $g$ in }\mathcal{A}\},\\ \shortintertext{and}
\mathcal {F}_{3}:&=\{ f\in{\mathcal{A}}: \operatorname{Re}(f(z)/(z+z^2/2))>0\} .
\end{align*}
These  classes  $\mathcal{F}_{1}$,  $\mathcal{F}_{2}$ and  $\mathcal{F}_{3}$ are nonempty. Indeed, if the functions $f_i:\mathbb{D}\to\mathbb{C}$, $i=1,2,3$, are  defined by
\begin{equation}\label{extremal}
f_{1}(z)=\frac{(1+z)^{2}(z+z^2/2)}{(1-z)^{2}}, \quad f_{2}(z)=\frac{(1+z)^{2}(z+z^2/2)}{(1-z)}
\end{equation} and
\begin{equation}\label{extremal1}
f_{3}(z)=\frac{(1+z)(z+z^2/2)}{(1-z)},
\end{equation}
then it follows that  $f_{i}$ belongs to the class $\mathcal{F}_{i}$; the functions $f_1$
and $f_2$ satisfy the respective condition with $g =f_{3} $.  It is also clear that the class $\mathcal{F}_3$ is a subclass of close-to-star functions while the classes $\mathcal{F}_1$ and $\mathcal{F}_2$ are not. The functions in these classes are also not necessarily univalent. Indeed, the radius of univalence   $R_{\mathcal{S}}(\mathcal{F}_1)\approx 0.210756 $,  $R_{\mathcal{S}}(\mathcal{F}_2)\approx 0.248032 $, $R_{\mathcal{S}}(\mathcal{F}_3)\approx  0.347296$ are respectively the
smallest positive zero of the polynomials $P_{i}$ given by
\begin{equation}\label{star1}
 P_{1}(r)=1-5r+r^{2}+r^{3},\quad P_{2}(r)=2-8r-r^{2}+3r^{3},
\end{equation}
  and
\begin{equation}\label{star3}
  P_{3}(r)=1-3r+r^{3}.
\end{equation} These radius are in fact the radius of starlikeness of the respective classes (see Theorems~\ref{th1},~\ref{th2} ~and~ \ref{th3}). The sharpness of these radii follows as the
derivative of  $f_{1}$, $f_{2}$ and $f_{3}$,  given by
\begin{equation}
f'_{1}(z)=\frac{(1+z)(1+5z+z^2-z^{3})}{(1-z)^{3}}, \quad
f'_{2}(z)=\frac{(1+z)(2+8z-z^2 -3 z^3)}{2(1-z)^{2}},
\end{equation}
and
\begin{equation}
f'_{3}(z)=\frac{1+3z-z^{3} }{(1-z)^{2}},
\end{equation}
clearly vanishes at   $z=-R_{\mathcal{S}}(\mathcal{F}_{i})$ for $i=1,2,3$ respectively.

 Several subclasses of starlike functions are defined through subordination.
An analytic function $f$ is subordinate to the analytic function $g$, written $f\prec g$, if   there exists an analytic function $\omega:\mathbb{D}\to  \mathbb{D}$  with $\omega(0)=0$ and $f(z)=g(\omega(z))$ for all $z\in\mathbb{D}$. For univalent superordinate function $g$, we have $f\prec g$ if  $f(\mathbb{D})\subset g(\mathbb{D})$ and $f(0)=g(0)$.  Consider the functions $\varphi_i:\mathbb{D}\to \mathbb{C}$ defined by   $\varphi_{1}(z):=\sqrt{z+1}$,  $\varphi_{2}(z):=e^{z}$, $\varphi_{3}(z):=1+(4/3)z+(2/3)z^{2}$, $\varphi_{4}(z):= 1+ \sin z $, $\varphi_{5}(z) =: z + \sqrt{1 + z^{2}}$, $\varphi_{6}(z):=1+((zk+z^2)/(k^2-kz))$ where $ k=\sqrt{2}+1$ and $\varphi_{7}(z):=1+(2 \left(\log ((1+\sqrt{z})/(1-\sqrt{z}))\right)^{2}/\pi^{2}) $.
For $\varphi=\varphi_{i},(i=1,2,\dotsc,7)$ the class $\mathcal{{S}}^{*}(\varphi):=\{f\in\mathcal{A}:s_{f}\prec \varphi\}$ respectively becomes $\mathcal{{S}^{*}_{L}}$, $\mathcal{S}^{*}_{e}$, $\mathcal{S}_{c}^{*}$, $\mathcal{S}_{\text{sin}}^{*}$,  $\mathcal{S}_{\leftmoon}^{*}$, $\mathcal{S}_{R}^{*}$ and $\mathcal{S}^{*}_{p}$;  these  classes were studied in \cite{ cho,sushil,rajni,raina,ronn,kanika,sokol}.
For these $\varphi_i$, we study the $\mathcal{S}^*(\varphi)$  radius  of the  classes $\mathcal{F}_i$, $i=1,2,3$ introduced above. For example, for the class $\mathcal{F}_1$,
we have shown that the radius of starlikeness of order $\alpha$, $0\leq \alpha<1$,
is the smallest positive root in (0,1) of the  equation
\[ (\alpha-2)r^{3}-(2\alpha+2)r^{2}+(10-\alpha)r+2\alpha-2=0.\] In addition to finding
radius of lemniscate starlikeness, we have also shown that  $R_{\mathcal{S} }=R_{\mathcal{S} ^{*}} $,
 $R_{\mathcal{S}^{*}(1/2)}=R_{\mathcal{S}_{P}^{*}} $,
 $R_{\mathcal{S}^{*}(1/e)}=R_{\mathcal{S}_{e}^{*}} $,
 $R_{\mathcal{S}^{*}(1-\sin1)}=R_{\mathcal{S}_{\sin}^{*}} $,
 $R_{\mathcal{S}^{*}(\sqrt{2}-1)}=R_{\mathcal{S}_{\leftmoon}^{*}} $,
 $R_{\mathcal{S}^{*}(2(\sqrt{2}-1))}=R_{\mathcal{S}_{R}^{*}} $ and
 $R_{\mathcal{S}^{*}(1/3)}=R_{\mathcal{S}_{C}^{*}} $.
Similar results have been proved for the other two classes.

\section{Radius Problem for $\mathcal{F}_{1}$}
For  the function $f\in \mathcal{F}_{1}$, we
first determine the disk containing the image of the   disk $\mathbb{D}_r$ under
the mapping $zf'(z)/f(z)$.  This is done by associating the function $f$ with
 suitable functions with positive real part and then applying
the inequality (see \cite[Lemma 2]{shah})
\begin{equation}\label{e}
\left | \frac{z p'(z)}{p(z)} \right | \leq
 \frac{2(1-\alpha)|z|}{(1-|z|)(1+(1-2\alpha)|z|)} \quad  (z\in \mathbb{D})
\end{equation}
for the  function $p$ in the class $\mathscr{P}(\alpha)$ of all analytic function
$p:\mathbb{D}\rightarrow\mathbb{C}$ with $p(0)=1$ and
$\operatorname{Re}p(z)>\alpha$. We also need to know the image of the
disk $\mathbb{D}_r$ under the transform $w(z)=(z+1)/(z+2)$. This is a
linear fractional transformation and it  maps the disk $\mathbb{D}_r$ onto the disk
 \begin{equation}\label{c}
\left| w(z)-\frac{2- r^{2}}{4-r^{2}}\right|\leq \frac{ r}{4-r^{2}}.
\end{equation}

Since $f\in \mathcal{F}_1$, there is a function $g\in \mathcal{A}$ such that $\operatorname{Re}(f(z)/g(z))>0$ and $\operatorname{Re}(g(z)/(z+z^2/2))>0$ for all $z\in \mathbb{D}$. Thus, the functions  $p_{1},p_{2}:\mathbb{D}\rightarrow\mathbb{C}$  defined by  $p_{1}(z)=f(z)/g(z)$ and
$p_{2}(z)=g(z)/(z+z^2/2)$ are functions in $\mathcal{P}(0)$ and
\begin{equation}\label{a}
f(z)=p_{1}(z)  p_{2}(z) \left(z+ z^{2}/2\right)\quad (z\in\mathbb{D}).
\end{equation}
 From  \eqref{a}, it follows that
\begin{equation}\label{b}
{\frac{zf'(z)}{f(z)}=\frac{zp_{1}'(z)}{p_{1}(z)}+\frac{zp_{2}'(z)}{p_{2}(z)}+\frac{2(z+1)}{z+2}}.
\end{equation}
Using \eqref{e} (with $\alpha=0$) and \eqref{c} in \eqref{b}, we see that the image of the
disk $\mathbb{D}_r$ under the mapping $zf'(z)/f(z)$  is contained in the disk
\begin{equation}\label{f}
\left| \frac{zf'(z)}{f(z)}-\frac{4-2r^{2}}{4-r^{2}}\right|
\leq \frac{6r(3-r^{2})}{(1-r^{2})(4-r^{2})}.
\end{equation}
From \eqref{f}, it readily follows that
\begin{equation}\label{g}
\operatorname{Re} \left( \frac{zf'(z)}{f(z)}\right) \geq
\frac{4-2r^{2}}{4-r^{2}}-\frac{6r(3-r^{2})}{(1-r^{2})(4-r^{2})}
=\frac{2(1-5r+r^{2}+r^{3})}{(2-r)(1-r^{2})}  \quad  (\left| z \right| \leq r).
\end{equation}

Let  $R_{\mathcal{S}^{*}}\approx  0.2108$ be the unique zero in (0,1)
of the polynomial $1-5r+r^{2}+r^{3}$. Then,
 for every function $f\in \mathcal{F}_{1}$,  the inequality \eqref{g} shows that
  $\operatorname{Re}(s_{f}(z))>0$ in
each disk $\mathbb{D}_{r}$, for $0\leq r< R_{\mathcal{S}^{*}}$.
For the function $f_{1}$ defined in \eqref{extremal}, we have
\begin{equation}\label{th1b1}
s_{f_{1}}(z)= \frac{zf_{1}'(z)}{f_{1}(z)} =\frac{2(1+5z+z^2-z^3)}{(2+z)(1-z^{2})}
\end{equation}
and hence  $\operatorname{Re}(s_{f_{1}}(z))$ vanishes at $z=-R_{\mathcal{S}^{*}}$.
Thus, the radius of starlikeness  $R_{\mathcal{S}^{*}}$ for the class
$\mathcal{F}_{1}$ is the unique positive zero in $ (0,1)$ of the polynomial
$P_{1}$ defined in \eqref{star1} and is the same as the radius
of univalence  $R_{\mathcal{S}}$. Using the inequality \eqref{f}, we now determine     ${\mathcal{S^{*}}(\alpha)}$,
${\mathcal{{S}^{*}_{L}}}$, ${\mathcal{S}^{*}_{P}}$,
${\mathcal{S}^{*}_{e}}$, ${\mathcal{S}_{c}^{*}}$, ${\mathcal{S}_{\sin}^{*}}$,
${\mathcal{S}_{\leftmoon}^{*}}$ and ${\mathcal{S}_{R}^{*}} $
radii  for the class $\mathcal{F}_{1}$.
\begin{theorem}\label{th1}
The following sharp radii results  hold for the class $\mathcal{F}_{1}$:
\begin{enumerate}[label=(\roman*)]
\item For any  $0\leq \alpha<1$, the radius $R_{\mathcal{S^{*}}(\alpha)}$ is the smallest positive root of the  equation
\begin{equation}\label{1}
(\alpha-2)r^{3}-(2\alpha+2)r^{2}+(10-\alpha)r+2\alpha-2=0.
\end{equation}

\item The radius $R_{\mathcal{{S}^{*}_{L}}}$ ($\approx 0.0918$) is the smallest positive root  of the  equation
\begin{equation}
\label{2} (2-\sqrt{2})r^{3}-(2+2\sqrt{2})r^{2}+(\sqrt{2}-10)r+2\sqrt{2}-2=0.
\end{equation}

\item The radius $R_{\mathcal{{S}^{*}_{P}}}$ ($\approx 0.1092)$ is the same as
$R_{\mathcal{S}^{*}(1/2)}$.

\item The radius $R_{\mathcal{S}^{*}_{e}}$ ($ \approx 0.1370)$  is the same as  $R_{\mathcal{S}^{*}(1/e)}$.

\item The radius $R_{\mathcal{S}_{\sin}^{*}}$ ($\approx 0.17969)$  is the same as  $R_{\mathcal{S}^{*}(1-\sin1)}$.

\item The radius $R_{\mathcal{S}_{\leftmoon}^{*}}$ ( $\approx 0.12734)$ is  the same as $R_{\mathcal{S}^{*}(\sqrt{2}-1)}$.

\item The radius $R_{\mathcal{S}_{R}^{*}}$ ($\approx 0.0380$) is the same as  $R_{\mathcal{S}^{*}(2(\sqrt{2}-1))}$.

\item The radius $R_{\mathcal{S}_{C}^{*}}$ ($\approx 0.14418)$ is the same as  $R_{\mathcal{S}^{*}(1/3)}$.

\end{enumerate}\end{theorem}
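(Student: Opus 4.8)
The plan is to handle all eight parts through one device. By \eqref{f}, for every $f\in\mathcal F_1$ and every $r\in(0,1)$ the image $s_f(\mathbb D_r)$ lies in the closed disk $\Omega_r:=\{w:|w-c(r)|\le\rho(r)\}$ centred on the real axis, with $c(r)=(4-2r^2)/(4-r^2)$ and $\rho(r)=6r(3-r^2)/((1-r^2)(4-r^2))$. Hence, for $\mathcal G$ any of the classes appearing in (i)--(viii), it suffices to have $\Omega_r$ contained in the corresponding region $\Omega_{\mathcal G}$, and the sought radius is the least $r$ at which $\Omega_r$ first meets $\partial\Omega_{\mathcal G}$. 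A short computation shows that the extreme real points of $\Omega_r$ are $c(r)\pm\rho(r)=s_{f_1}(\pm r)$, where $f_1$ is the extremal function of \eqref{extremal}; this is the source of sharpness in every case, since both endpoints of $\Omega_r$ are actually attained by $f_1$ on the real diameter.

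For (i) the region is the half-plane $\{\operatorname{Re}w>\alpha\}$, whose nearest point to $\Omega_r$ is the left endpoint, so containment is exactly $c(r)-\rho(r)\ge\alpha$; by \eqref{g} this reads $2(1-5r+r^2+r^3)/((2-r)(1-r^2))\ge\alpha$, and clearing denominators turns the boundary case into \eqref{1}, with the radius being its smallest root in $(0,1)$. Sharpness is immediate because $\operatorname{Re}s_{f_1}(-r)=c(r)-\rho(r)$ meets $\alpha$ there. Parts (iii)--(viii) then reduce to (i): each $\varphi_i(\mathbb D)$ is placed so that, for a disk centred on the positive real axis near $1$, the binding constraint is again the left endpoint, and the leftmost point of $\varphi_i(\mathbb D)$ is the real value $\beta_i=\varphi_i(-1)$, namely $1/2,\ 1/e,\ 1-\sin1,\ \sqrt2-1,\ 2(\sqrt2-1),\ 1/3$ for the parabolic, exponential, sine, lune, $R$- and cardioid classes. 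I would invoke the known disk-containment criteria for these regions (as established in \cite{cho,sushil,rajni,raina,ronn,kanika,sokol}) to conclude in each case that $\Omega_r\subset\varphi_i(\mathbb D)$ is equivalent to $c(r)-\rho(r)\ge\beta_i$; this gives $R_{\mathcal G}=R_{\mathcal S^{*}(\beta_i)}$, and the stated numerical values follow by substituting $\alpha=\beta_i$ into \eqref{1}.

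Part (ii), the lemniscate class $\mathcal S^{*}_L$, is the genuinely different case and the main obstacle. Its region $\{w:|w^2-1|<1\}$ pinches to the origin on the left, so the left-endpoint heuristic fails and instead the right endpoint should bind. The key step is to show $\max_{|w-c(r)|=\rho(r)}|w^2-1|$ is attained at $w=c(r)+\rho(r)$. Writing $w=c+\rho e^{i\theta}$ and $t=\cos\theta$, one finds that $|w^2-1|^2$ is a quadratic in $t$ with negative leading coefficient $4\rho^2(c^2-1)$ (since $c=c(r)<1$) whose vertex lies to the right of $[-1,1]$; the maximum over $t\in[-1,1]$ therefore occurs at $t=1$, i.e.\ at the rightmost point. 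Containment thus collapses to $c(r)+\rho(r)\le\sqrt2$, and since $c(r)+\rho(r)=s_{f_1}(r)=2(1+5r+r^2-r^3)/((2+r)(1-r^2))$, setting this equal to $\sqrt2$ and clearing denominators yields \eqref{2} after cancelling the extraneous factor $(2-r)$; sharpness holds because $s_{f_1}(r)=\sqrt2$ lies on $|w^2-1|=1$. The single point requiring care throughout is verifying that the chosen real endpoint is truly the nearest boundary point, so that no interior arc of $\partial\Omega_{\mathcal G}$ is met first; for the lemniscate this is precisely the vertex computation above, and for the remaining classes it is supplied by the cited containment lemmas.
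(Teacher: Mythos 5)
Your proposal is correct, and on parts (i) and (iii)--(viii) it is essentially the paper's own argument: the paper likewise works inside the disk \eqref{f}, reduces each target region to the left-endpoint condition $c(r)-\rho(r)\ge\beta_i$ (which, by \eqref{g}, is the starlikeness-of-order-$\beta_i$ condition, whence $R_{\mathcal G}=R_{\mathcal S^*(\beta_i)}$), quotes a disk-containment lemma for each region, and obtains sharpness from $s_{f_1}(-R)=\beta_i\in\partial\varphi_i(\mathbb{D})$. Three caveats on those parts: the lemmas actually needed are \cite[Lemma 2.2]{shan} (parabolic), \cite[Lemma 2.2]{rajni} (exponential), \cite[Lemma 3.3]{cho} (sine), \cite[Lemma 2.1]{gandhi} (lune), \cite[Lemma 2.2]{sushil} ($R$-class) and \cite[Lemma 2.5]{kanika} (cardioid) --- your citation list omits \cite{shan} and \cite{gandhi}; each lemma carries a hypothesis on the location of the centre, which the paper verifies explicitly (e.g.\ $c(r)\in(0.9899,1]\subset(1/2,3/2)$) and your ``centred near $1$'' remark must be expanded into such a check; and your claim that $\varphi_i(-1)$ is the leftmost point of $\varphi_i(\mathbb{D})$ is not literally true (the sine region contains points of real part less than $1-\sin 1$) --- what the argument uses is only that the real trace of $\varphi_i(\mathbb{D})$ is the interval $(\varphi_i(-1),\varphi_i(1))$, plus the cited sufficiency lemmas. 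Where you genuinely diverge is part (ii): the paper simply cites \cite[Lemma 2.2]{lemniscate} (for $2\sqrt{2}/3\le a<\sqrt{2}$ the disk $|w-a|<\sqrt{2}-a$ lies in $|w^2-1|<1$) after checking $c(r)$ stays in that window, whereas you re-derive the containment by maximizing $|w^2-1|^2$ over the circle as a quadratic in $t=\cos\theta$; this makes the proof self-contained, which is a real (if modest) gain. Your computation is sound --- the leading coefficient is indeed $4\rho^2(c^2-1)<0$ --- but the pivot of the argument, that the vertex lies to the right of $[-1,1]$, is asserted where it must be estimated: one finds $t^*=c\,(\rho^2-(1-c^2))/(2\rho(1-c^2))$, and since $\rho^2/(1-c^2)=36(3-r^2)^2/\bigl((1-r^2)^2(8-3r^2)\bigr)\ge 40$ for all $r\in(0,1)$, one gets $t^*\ge 39c/(2\rho)$, which exceeds $1$ for $r\le R$ because there $\rho(r)<\sqrt{2}-c(r)<\sqrt{2}$ and $c(r)>2/3$; without some such quantitative step the lemniscate case is incomplete. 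Finally, a small slip: no factor $(2-r)$ appears when clearing denominators in $c(r)+\rho(r)=2(1+5r+r^2-r^3)/((2+r)(1-r^2))=\sqrt{2}$; that identity yields \eqref{2} directly.
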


\begin{proof}
\begin{enumerate}[label=(\roman*),  leftmargin=12pt]
\item Let the function $f\in$ $\mathcal{F}_{1}$ and $\alpha\in [0,1)$. Let $R:=R_{\mathcal{S^{*}}(\alpha)}$
be the  smallest positive root of the  equation \eqref{1} so that
\begin{equation}\label{th1p1}
 2(1-5R+R^{2}+R^{3})= \alpha(2-R)(1-R^{2}).
\end{equation}
The function
\begin{equation*}
h(r)= \frac{2(1-5r+r^{2}+r^{3})}{(2-r)(1-r^{2})}
\end{equation*}
is decreasing in $[0,1)$ and hence, for $ 0\leq r< R$,
we have, using \eqref{g} and \eqref{th1p1},
\begin{equation*}
\operatorname{Re} \left( \frac{zf'(z)}{f(z)}\right) \geq\frac{2(1-5r+r^{2}+r^{3})}{(2-r)(1-r^{2})}>
\frac{2(1-5R+R^{2}+R^{3})}{(2-R)(1-R^{2})}= \alpha   .
\end{equation*}
This proves that the function $f$ is starlike of order $\alpha $
in each disk $\mathbb{D}_{r}$ for $0\leq r< R $.
At the point $z=-R$, it can be seen, using \eqref{th1b1} and  \eqref{th1p1},  that
the function $f_{1}$ defined in \eqref{extremal} satisfies
\begin{equation*}
\operatorname{Re}\left( \frac{zf_{1}'(z)}{f_{1}(z)}\right)
=\frac{2(1-5R+R^2-R^3)}{(2-R)(1-R^{2})} =\alpha.
\end{equation*}
This shows that the radius $R$  is the sharp  radius of
starlikeness of order $\alpha $ of the class $\mathcal{F}_{1}$.

\item Let  $R:=R_{\mathcal{{S}^{*}_{L}}}$  be the smallest  positive root
of the equation \eqref{2} so that
\begin{equation}\label{th1p2}
2(1+5R+R^{2}-R^{3})=\sqrt{2}(1-R^2)(2+R).
\end{equation} Since the function
\[ h(r):=\frac{6r(3-r^{2})}{(1-r^{2})(4-r^{2})}+ \frac{4-2r^{2}}{4-r^{2}}
= \frac{2(1+5r+r^{2}-r^{3})}{(2+r)(1-r^2)}\]
is an increasing  function of $r$ in [0,1), it follows that, for  $0\leq r < R$,
$h(r)<h(R)=\sqrt{2}$ and hence, for  $0\leq r < R$, we have
\begin{align}\label{z}
  \frac{6r(3-r^{2})}{(1-r^{2})(4-r^{2})}&< \sqrt{2}- \frac{4-2r^{2}}{4-r^{2}}.
\end{align}
From \eqref{f} and \eqref{z}, we obtain
\begin{equation*}
\left|\frac{zf'(z)}{f(z)}-\frac{4-2r^{2}}{4-r^{2}} \right|< \sqrt{2}-\frac{4-2r^{2}}{4-r^{2}}  \quad  (\left| z \right| \leq r).
\end{equation*}
For  $0\leq r< R$, the center of the above disk
$c(r)= (4-2r^{2})/(4-r^{2})$ (being a decreasing function of $r$ on [0,1])
lies in the interval $[c(R),1]\subset (c(0.1), 1]
\approx (.997494,1]\subset [2\sqrt{2}/3,\sqrt{2})$. When $a\in [2\sqrt{2}/3,\sqrt{2})$, by \cite[Lemma 2.2]{lemniscate},
the disk $|w-a|<\sqrt{2}-a$ is contained in the lemniscate region $|w^2-1|<1$ and
hence,  for  $0\leq r< R$, we have
\begin{equation*}
\left|\left({\frac{zf'(z)}{f(z)}}\right)^{2}-1\right|< 1.
\end{equation*}
Thus, the radius of lemniscate starlikeness of the class $\mathcal{F}_1$
is at least $R$.  To show that the radius $R$  is sharp, using \eqref{th1b1} and \eqref{th1p2}, we see  that the function
$f_{1}$ defined in $\eqref{extremal}$ satisfies, at $z=R$,
\[ \frac{zf_{1}'(z)}{f_{1}(z)} =\frac{2(1+5R+R^2-R^3)}{(2+R)(1-R^{2})}=\sqrt{2} \]
and therefore
\begin{equation}
\left|\left({\frac{zf_{1}'(z)}{f_{1}(z)}}\right)^{2}-1\right|= 1.
\end{equation}
\item The number $R:=R_{\mathcal{{S}^{*}_{P}}}$ is the smallest  positive root  of the equation %\eqref{3} so that
 \begin{equation}\label{th1p3}
 2(1-5R+R^{2}+R^{3})= (1/2)(2-R)(1-R^{2}).
 \end{equation}
Since the function
\[ h(r):=\frac{6r(r^{2}-3)}{(1-r^{2})(4-r^{2})}+ \frac{4-2r^{2}}{4-r^{2}}
= \frac{2(1-5r+r^{2}+r^{3})}{(2-r)(1-r^2)}\]
is decreasing  function of $r$ in [0,1), it follows that,
$h(r)>h(R)=1/2$  for  $0\leq r < R$, and hence, for  $0\leq r < R$, we have
\begin{align}\label{z1}
\frac{6r(3-r^{2})}{(1-r^{2})(4-r^{2})} <\frac{4-2r^{2}}{4-r^{2}}-\frac{1}{2}.
\end{align}
From \eqref{f} and \eqref{z1}, we get
\begin{align}\label{z2}
\left|\frac{zf'(z)}{f(z)}-\frac{4-2r^{2}}{4-r^{2}} \right|< \frac{4-2r^{2}}{4-r^{2}}-\frac{1}{2}  \quad  (\left| z \right| \leq r).
\end{align}
For  $0\leq r< R$, the center of the above disk
$c(r)= (4-2r^{2})/(4-r^{2})$ (being a decreasing function of $r$ on [0,1])
lies in the interval $[c(R),1]\subset (c(0.2), 1]
\approx (.989899,1]\subset (1/2,3/2)$. When
$a\in (1/2,3/2)$, by \cite[Lemma 2.2]{shan},
the disk $|w-a|<a-(1/2)$ is contained in the parabolic region $|w-1|<\operatorname{Re}(w)$ and
hence,  for  $0\leq r< R$,  we have
 \begin{align}\label{9}
\left|{\frac{zf'(z)}{f(z)}}-1\right|<\operatorname{Re}\left(\frac{zf'(z)}{f(z)}\right) \quad  (\left| z \right| \leq r).
\end{align}
Thus, the radius of parabolic starlikeness of the class $\mathcal{F}_1$
is at least $R$.  To show that the radius $R$  is sharp, using \eqref{th1b1} and \eqref{th1p3}, we see  that the function
$f_{1}$ defined in $\eqref{extremal}$ satisfies, at $z=-R$,
\[ \frac{zf_{1}'(z)}{f_{1}(z)} =\frac{ 2(1-5R+R^{2}+R^{3})}{(2-R)(1-R^{2})}=\frac{1}{2} \]
and therefore
\begin{align}
\left|{\frac{zf_{1}'(z)}{f_{1}(z)}}-1\right|=\frac{1}{2}=\operatorname{Re}\left({\frac{zf_{1}'(z)}{f_{1}(z)}} \right).
\end{align}

\item The number $R:=R_{\mathcal{S}^{*}_{e}}$  is the smallest  positive root  of the equation
\begin{equation}\label{th1p4}
2(1-5R+R^{2}+R^{3})= (1/e)(2-R)(1-R^{2}).
\end{equation}
Since the function
\[ h(r):=\frac{6r(r^{2}-3)}{(1-r^{2})(4-r^{2})}+ \frac{4-2r^{2}}{4-r^{2}}
= \frac{2(1-5r+r^{2}+r^{3})}{(2-r)(1-r^2)}\]
is decreasing  function of $r$ in [0,1), it follows that
$h(r)>h(R)=1/e$  for  $0\leq r < R$ and hence, for  $0\leq r < R$,  we have
\begin{align}\label{z3}
\frac{6r(3-r^{2})}{(1-r^{2})(4-r^{2})}<\frac{4-2r^{2}}{4-r^{2}}-\frac{1}{e}.
\end{align}
From \eqref{f} and \eqref{z3}, we get
\begin{align*}
\left|\frac{zf'(z)}{f(z)}-\frac{4-2r^{2}}{4-r^{2}} \right|<\frac{4-2r^{2}}{4-r^{2}}-\frac{1}{e}  \quad  (\left| z \right| \leq r).
\end{align*}
For  $0\leq r< R$, the center of the above disk
$c(r)= (4-2r^{2})/(4-r^{2})$ (being a decreasing function of $r$ on [0,1])
lies in the interval $[c(R),1]\subset (c(0.2), 1]
\approx (.989899,1]\subset (e^{-1},(e+e^{-1})/2$]. When
$a\in (e^{-1},(e+e^{-1})/2]$, by \cite[Lemma 2.2]{rajni},
the disk $|w-a|<a-e^{-1}$ is contained in the  region $|\log w|<1$ and
hence,  for  $0\leq r< R$,
\begin{align}\label{10}
\left|\operatorname{log}\left(\frac{zf'(z)}{f(z)}\right) \right|< 1  \quad  (\left| z \right| \leq r).
\end{align}
Thus, the radius of exponential starlikeness of the class $\mathcal{F}_1$
is at least $R$.  To show that the radius $R$ is sharp, using \eqref{th1b1} and \eqref{th1p4}, we see  that the function
$f_{1}$ defined in $\eqref{extremal}$ satisfies, at $z=-R$,
\[ \frac{zf_{1}'(z)}{f_{1}(z)} =\frac{2(1-5R+R^{2}+R^{3})}{(2+R)(1-R^{2})}=\frac{1}{e} \]
and therefore
 \begin{align}
\left|\operatorname{log}\left(\frac{zf_{1}'(z)}{f_{1}(z)}\right) \right|=1 .
\end{align}

\item The number $R:=R_{\mathcal{S}_{\sin}^{*}}$ is the smallest  positive root  of the equation
\begin{equation}\label{th1p5}
2(1-5R+R^{2}+R^{3})= (1-\sin1)(2-R)(1-R^{2}).
\end{equation}
Since the function
\[ h(r):=\frac{6r(r^{2}-3)}{(1-r^{2})(4-r^{2})}+ \frac{4-2r^{2}}{4-r^{2}}
= \frac{2(1-5r+r^{2}+r^{3})}{(2-r)(1-r^2)}>1-\sin1\]
is decreasing  function of $r$ in [0,1), it follows that,
$h(r)>h(R)=1-\sin1$ for  $0\leq r < R$ and hence, for  $0\leq r < R$,  we have
\begin{align}\label{z4}
\frac{6r(3-r^{2})}{(1-r^{2})(4-r^{2})}<\frac{4-2r^{2}}{4-r^{2}}+\text{sin1}-1.
\end{align}
From $\eqref{f}$ and $\eqref{z4}$, we get
\begin{align*}
\left|\frac{zf'(z)}{f(z)}-\frac{4-2r^{2}}{4-r^{2}} \right|<\text{sin1} -1+\frac{4-2r^{2}}{4-r^{2}}\quad  (\left| z \right| \leq r).
\end{align*}
For  $0\leq r< R$, the center of the above disk
$c(r)= (4-2r^{2})/(4-r^{2})$ (being a decreasing function of $r$ on [0,1])
lies in the interval $[c(R),1]\subset (c(0.2), 1]
\approx (.989899,1]\subset (-1-\sin1, 1-\sin1)$. When
$a\in (-1-\sin1, 1+\sin1)$, by \cite[Lemma 3.3]{cho},
the disk $|w-a|<\sin1-|a-1|$ is contained in the region $\varphi_{4}(\mathbb{D})$, where $\varphi_{4}(z)=1+\sin z$ and
hence,  for  $0\leq r< R$,
${s_{f}}(\mathbb{D}_{r})\subset\varphi_{4}(\mathbb{D})$.
Thus, the radius of sine starlikeness of the class $\mathcal{F}_1$
is at least $R$.  To show that the radius $R$  is sharp, using \eqref{th1b1} and \eqref{th1p5}, we see  that the function $f_{1}$ defined in \eqref{extremal} satisfies, at $z=-R$,
\[ \frac{zf_{1}'(z)}{f_{1}(z)} =\frac{2(1-5R+R^{2}+R^{3})}{(2-R)(1-R^{2})}
=1-\sin1=\varphi_{4}(-1) \in \partial \varphi_{4}(\mathbb{D}). \]

\item The number $R:=R_{\mathcal{S}_{\leftmoon}^{*}}$ is the smallest  positive root  of the equation
\begin{equation}\label{th1p6}
2(1-5R+R^{2}+R^{3})= (\sqrt{2}-1)(2-R)(1-R^{2}).
\end{equation} Since the function
\[ h(r):=\frac{6r(r^{2}-3)}{(1-r^{2})(4-r^{2})}+ \frac{4-2r^{2}}{4-r^{2}}
= \frac{2(1-5r+r^{2}+r^{3})}{(2-r)(1-r^2)}\]
is decreasing  function of $r$ in [0,1), it follows that
$h(r)>h(R)=\sqrt{2}-1$  for  $0\leq r < R$ and hence, for  $0\leq r < R$, we have
\begin{align}\label{z5}
\frac{6r(3-r^{2})}{(1-r^{2})(4-r^{2})}<\frac{4-2r^{2}}{4-r^{2}}+1-\sqrt{2}.
\end{align}
From \eqref{f} and \eqref{z5}, we get
\begin{align*}
\left|\frac{zf'(z)}{f(z)}-\frac{4-2r^{2}}{4-r^{2}} \right|<\frac{4-2r^{2}}{4-r^{2}}+1-\sqrt{2}  \quad  (\left| z \right| \leq r).
\end{align*}
For  $0\leq r< R$, the center of the above disk
$c(r)= (4-2r^{2})/(4-r^{2})$ (being a decreasing function of $r$ on [0,1])
lies in the interval $[c(R),1]\subset (c(0.2), 1]
\approx (.9898991,1]\subset (\sqrt{2}$-1,$\sqrt{2}+1$). When
$a\in(\sqrt{2}-1,\sqrt{2}+1)$, by \cite[Lemma 2.1]{gandhi},
the disk $|w-a|<1-|\sqrt{2}-a|$ is contained in the  region $|w^{2}-1|<2|w|$ and
hence,  for  $0\leq r< R$,
\begin{align}\label{11}
\left|\left({\frac{zf'(z)}{f(z)}}\right)^{2}-1\right|< 2\left|\left({\frac{zf'(z)}{f(z)}}\right)\right|  \quad  (\left| z \right| \leq r).
\end{align}
Thus, the radius of lune starlikeness of the class $\mathcal{F}_1$
is at least $R$.  To show that the radius $R$  is sharp, using \eqref{th1b1} and \eqref{th1p6}, we see  that the function
$f_{1}$ defined in $\eqref{extremal}$ satisfies
\[ \frac{zf_{1}'(z)}{f_{1}(z)} =\frac{2(1-5R+R^{2}+R^{3})}{(2-R)(1-R^{2})}=\sqrt{2}-1 \]
at $z=-R$ and therefore
\begin{align}
 \left|\left({\frac{zf_{1}'(z)}{f_{1}(z)}}\right)^{2}-1\right|= 2\left|\left({\frac{zf_{1}'(z)}{f_{1}(z)}}\right)\right|.
\end{align}

\item The number $R:=R_{\mathcal{{S}^{*}_{R}}}$  is the smallest  positive root  of the equation
\begin{equation}\label{th1p7}
 2(1-5R+R^{2}+R^{3})= 2(\sqrt{2}-1)(2-R)(1-R^{2}).
\end{equation}
 Since the function
 \[ h(r):=\frac{6r(r^{2}-3)}{(1-r^{2})(4-r^{2})}+ \frac{4-2r^{2}}{4-r^{2}}
 = \frac{2(1-5r+r^{2}+r^{3})}{(2-r)(1-r^2)}\]
 is decreasing  function of $r$ in [0,1), it follows that
 $h(r)>h(R)=2(\sqrt{2}-1)$ for  $0\leq r < R$ and hence, for  $0\leq r < R$, we have
\begin{align}\label{z6}
  \frac{6r(3-r^{2})}{(1-r^{2})(4-r^{2})}< \frac{4-2r^{2}}{4-r^{2}}-2(\sqrt{2}-1).
\end{align}
From \eqref{f} and \eqref{z6}, we get
\begin{align*}
\left|\frac{zf'(z)}{f(z)}-\frac{4-2r^{2}}{4-r^{2}} \right|< \frac{4-2r^{2}}{4-r^{2}}-2(\sqrt{2}-1)  \quad  (\left| z \right| \leq r).
\end{align*}
For  $0\leq r< R$, the center of the above disk
$c(r)= (4-2r^{2})/(4-r^{2})$ (being a decreasing function of $r$ on [0,1])
lies in the interval $[c(R),1]\subset (c(0.1), 1]
\approx (.99741,1]\subset (2(\sqrt{2}-1), \sqrt{2}]$. When
$a\in  (2(\sqrt{2}-1), \sqrt{2}]$, by \cite[Lemma 2.2]{sushil},
the disk $|w-a|<a-2(\sqrt{2}-1)$ is contained in the region $\varphi_{6}(\mathbb{D})$, where $\varphi_{6}(z):=1+(zk+z^2/(k^2-kz))$ and $k=\sqrt{2}+1$.
Hence,  for  $0\leq r< R$,
${s_{f}}(\mathbb{D}_{r})\subset\varphi_{6}(\mathbb{D})$.
Thus, the radius of the class $\mathcal{F}_1$
is at least $R$.  To show that the radius $R$  is sharp, using \eqref{th1b1} and \eqref{th1p7}, we see  that the function $f_{1}$ defined in $\eqref{extremal}$ satisfies, at $z=-R$,
\[ \frac{zf_{1}'(z)}{f_{1}(z)} =\frac{2(1-5R+R^{2}+R^{3})}{(2+R)(1-R^{2})}=2(\sqrt{2}-1)= \varphi_{6}(-1)\in \partial \varphi_{6}(\mathbb{D}). \]

\item The number $R:=R_{\mathcal{{S}^{*}_{C}}}$  is the smallest  positive root  of the equation
\begin{equation}\label{th1p8}
2(1-5R+R^{2}+R^{3})= (1/3)(2-R)(1-R^{2}).
\end{equation}
Since the function
\[ h(r):=\frac{6r(r^{2}-3)}{(1-r^{2})(4-r^{2})}+ \frac{4-2r^{2}}{4-r^{2}}
 = \frac{2(1-5r+r^{2}+r^{3})}{(2-r)(1-r^2)}\]
  is decreasing  function of $r$ in [0,1), it follows that
  $h(r)>h(R)=1/3$ for  $0\leq r < R$ and hence, for  $0\leq r < R$, we have
\begin{align}\label{z7}
  \frac{6r(3-r^{2})}{(1-r^{2})(4-r^{2})}-\frac{4-2r^{2}}{4-r^{2}}<-\frac{1}{3}.
\end{align}
From \eqref{f} and \eqref{z7}, we get
\begin{align*}
\left|\frac{zf'(z)}{f(z)}-\frac{4-2r^{2}}{4-r^{2}} \right|<\frac{4-2r^{2}}{4-r^{2}}-\frac{1}{3}  \quad  (\left| z \right| \leq r) .
\end{align*}
For  $0\leq r< R$, the center of the above disk
$c(r)= (4-2r^{2})/(4-r^{2})$ (being a decreasing function of $r$ on [0,1])
lies in the interval $[c(R),1]\subset (c(0.2), 1] \approx (.989899,1]\subset (1/3,5/3)$. When
$a\in  (1/3,5/3)$, by \cite[Lemma 2.5]{kanika}, the disk $|w-a|<a-1/3$ is lies in the cardioid region $\varphi_{3}(\mathbb{D})$. Hence,  for  $0\leq r< R$, ${s_{f}}(\mathbb{D}_{r})\subset\varphi_{3}(\mathbb{D})$.
Thus, the radius of cardioid starlikeness of the class $\mathcal{F}_1$
is at least $R$.  To show that the radius $R$  is sharp, using \eqref{th1b1} and \eqref{th1p8}, we see  that the function $f_{1}$ defined in \eqref{extremal} satisfies, at $z=-R$, \[ \frac{zf_{1}'(z)}{f_{1}(z)}
=\frac{2(1-5R+R^{2}+R^{3})}{(2-R)(1-R^{2})}=1/3=\varphi_{3}(-1)\in \partial \varphi_{3}(\mathbb{D}). \qedhere\]\end{enumerate}
\end{proof}

\section{Radius Problem for $\mathcal{F}_{2}$}
For the function $f\in \mathcal{F}_{2}$, there is a function $g\in \mathcal{A}$ such that $\operatorname{Re}({{g(z)}/{f(z)}})>1/2$ and $\operatorname{Re}({2g(z)}/{(z+z^{2}/2}))>0 $.
The functions $p_{1},p_{2}:\mathbb{D}\to  \mathbb{C}$ defined by $p_{1}(z)=g(z)/f(z)$,  $p_{2}(z)=g(z)/(z+z^2/2)$ are the functions in $\mathscr{P}(1/2)$ and $\mathscr{P}(0)$ respectively and
\begin{equation}\label{a1}
f(z)=(p_{2}(z)/p_{1}(z))(z+z^{2}/2) \quad( z\in \mathbb{D}).
\end{equation}
From $\eqref{a1}$, it follows that
\begin{equation}\label{b1}
{\frac{zf'(z)}{f(z)}=\frac{zp_{2}'(z)}{p_{2}(z)}-\frac{zp_{1}'(z)}{p_{1}(z)}+\frac{2(z+1)}{z+2}}.
\end{equation}
Using \eqref{e} and \eqref{c} in \eqref{b1}, we see that the image of the disk $\mathbb{D}_r$ under the mapping $zf'(z)/f(z)$  is contained in the disk
\begin{equation}\label{f1}
\left| \frac{zf'(z)}{f(z)}-\frac{4-2r^{2}}{4-r^{2}}\right|\leq \frac{r(14+4r-5r^{2}-r^{3})}{(1-r^{2})(4-r^{2})}.
\end{equation}
From \eqref{f1}, it readily follows that
\begin{equation}\label{g1}
\operatorname{Re} \left( \frac{zf'(z)}{f(z)}\right)\geq \frac{4-2r^{2}}{4-r^{2}}- \frac{r(14+4r-5r^{2}-r^{3})}{(1-r^{2})(4-r^{2})}
= \frac{2-8r-r^{2}+3r^{3}}{(2-r)(1-r^{2})}  \quad  (\left| z \right| \leq r).
\end{equation}
Let  $R_{\mathcal{S}^{*}}\approx  0.248$ be the zero in (0,1)
of the polynomial $3r^{3}-r^{2}-8r+2$. Then,
for every function $f\in \mathcal{F}_{2}$,  the inequality \eqref{g1} shows that
$\operatorname{Re}(s_{f}(z))>0$ in
each disk $\mathbb{D}_{r}$, for $0\leq r< R_{\mathcal{S}^{*}}$.
For the function $f_{2}$ defined in \eqref{extremal}, we have
\begin{equation}\label{th2b1}
s_{f_{2}}(z)= \frac{zf_{2}'(z)}{f_{2}(z)} =\frac{2+8z-z^2-3z^3}{(2+z)(1-z^{2})}
\end{equation}
and hence  $\operatorname{Re}(s_{f_{2}}(z))$ vanishes at $z=-R_{\mathcal{S}^{*}}$.
Thus, the radius of starlikeness  $R_{\mathcal{S}^{*}}$ for the class
$\mathcal{F}_{2}$ is the smallest positive zero in $(0,1)$ of the polynomial
$P_{2}$ defined in \eqref{star1} and is the same as the radius
of univalence  $R_{\mathcal{S}}$. Using the inequality \eqref{f1}, we now determine     ${\mathcal{S^{*}}(\alpha)}$, ${\mathcal{S}^{*}_{P}}$,
${\mathcal{S}^{*}_{e}}$, ${\mathcal{S}_{c}^{*}}$, ${\mathcal{S}_{sin}^{*}}$,
${\mathcal{S}_{\leftmoon}^{*}}$ and ${\mathcal{S}_{R}^{*}} $
radii  for the class $\mathcal{F}_{2}$.

\begin{theorem}$\label{th2}$
The following sharp radii results  hold for the class   $\mathcal{F}_{2}$:
\begin{enumerate}[label=(\roman*)]
\item For any  $0\leq \alpha<1$, the radius $R_{\mathcal{S^{*}}(\alpha)}$ is the smallest positive root of the  polynomial
\begin{equation}\label{1a}
(3-\alpha)r^{3}+(2\alpha-1)r^{2}-(8-\alpha)r+2-2\alpha=0.
\end{equation}
\item The radius $R_{\mathcal{S}^{*}_{p}}$ ($\approx 0.1341)$ is the same as  $R_{\mathcal{S}^{*}(1/2)}$.

\item The radius $R_{\mathcal{S}^{*}_{e}} (\approx 0.16628)$ is the same as  $R_{\mathcal{S}^{*}(1/e)}$.

\item The radius $R_{\mathcal{S}_{\sin}^{*}}$ ($\approx 0.2142)$ is the same as $R_{\mathcal{S}^{*}(1-\sin1)}$.

\item The radius $R_{\mathcal{S}_{\leftmoon}^{*}}$ ($\approx 0.1551$) is the same as  $R_{\mathcal{S}^{*}(\sqrt{2}-1)}$.

\item The radius $R_{\mathcal{S}_{R}^{*}}$  ($\approx 0.0481$) is the same as  $R_{\mathcal{S}^{*}(2(\sqrt{2}-1))}$.

\item The radius $R_{\mathcal{S}_{c}^{*}}$ ($\approx 0.1744)$ is the same as  $R_{\mathcal{S}^{*}(1/3)}$.
\end{enumerate}
\end{theorem}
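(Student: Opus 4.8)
The plan is to mirror the proof of Theorem~\ref{th1}, replacing the $\mathcal{F}_1$ inequalities by their $\mathcal{F}_2$ counterparts \eqref{f1} and \eqref{g1} and the extremal function $f_1$ by $f_2$. For part (i) I would set $R:=R_{\mathcal{S}^*(\alpha)}$ to be the smallest positive root in $(0,1)$ of \eqref{1a} and first record that \eqref{1a} is precisely the identity
\[
 2-8R-R^{2}+3R^{3}=\alpha(2-R)(1-R^{2}),
\]
obtained by clearing denominators in the equation $h(R)=\alpha$, where
\[
 h(r)=\frac{2-8r-r^{2}+3r^{3}}{(2-r)(1-r^{2})}
\]
is the lower bound appearing in \eqref{g1}. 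The key analytic fact to establish is that $h$ is decreasing on $[0,1)$; granting this, \eqref{g1} gives $\operatorname{Re}(s_f(z))\ge h(r)>h(R)=\alpha$ for $|z|\le r<R$, so $f$ is starlike of order $\alpha$ on $\mathbb{D}_r$. Sharpness follows from \eqref{th2b1}: evaluating $s_{f_2}$ at $z=-R$ gives $s_{f_2}(-R)=(2-8R-R^{2}+3R^{3})/((2-R)(1-R^{2}))=\alpha$, so the bound cannot be improved.

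For parts (ii)--(vii) the strategy is uniform and again follows Theorem~\ref{th1}. In each case I would take $R$ to be the stated $R_{\mathcal{S}^*(\beta)}$ with $\beta\in\{1/2,\,1/e,\,1-\sin 1,\,\sqrt2-1,\,2(\sqrt2-1),\,1/3\}$, so that by part (i) one has $h(R)=\beta$. I would use \eqref{f1} to confine $s_f(\mathbb{D}_r)$ to the disk centered at $c(r)=(4-2r^{2})/(4-r^{2})$ of radius $r(14+4r-5r^{2}-r^{3})/((1-r^{2})(4-r^{2}))$, then invoke the corresponding cited lemma (the parabolic, exponential, sine, lune, $\varphi_6$, and cardioid lemmas already used for $\mathcal{F}_1$), each of which asserts that a disk $|w-a|<\rho$ lies inside the target region $\varphi(\mathbb{D})$ whenever its center $a$ is in a prescribed interval around $1$ and $\rho$ does not exceed the distance from $a$ to the left threshold $\beta$. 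The one point to check each time is that $c(r)$ indeed lies in that interval for all $0\le r<R$: since $c$ decreases from $c(0)=1$ and the radii here are all small (at most $\approx 0.21$), the center stays in a narrow subinterval $[c(R),1]$ near $1$, which is contained in every required interval.

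The mechanism that makes all six reductions collapse to part (i) is that $c(r)\le 1$ throughout. Because the center lies to the left of $1$, the distance from $c(r)$ to the relevant threshold simplifies to $c(r)-\beta$ (for instance $\sin 1-|a-1|=a-(1-\sin 1)$ in the sine case and $1-|\sqrt2-a|=a-(\sqrt2-1)$ in the lune case), so the containment requirement
\[
 \frac{r(14+4r-5r^{2}-r^{3})}{(1-r^{2})(4-r^{2})}<c(r)-\beta
\]
is exactly the inequality $h(r)>\beta$ that defines $R_{\mathcal{S}^*(\beta)}$ in part (i). Hence each property holds on precisely the disk where order-$\beta$ starlikeness holds, yielding $R_{\mathcal{G}}=R_{\mathcal{S}^*(\beta)}$. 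Sharpness in every case comes from $f_2$ at $z=-R$, where $s_{f_2}(-R)=\beta=\varphi(-1)\in\partial\varphi(\mathbb{D})$.

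The only genuine computation, and the place where care is needed, is verifying the monotonicity of $h$ on $[0,1)$ and recomputing the center intervals with the new disk radius $r(14+4r-5r^{2}-r^{3})/((1-r^{2})(4-r^{2}))$, which differs from the $\mathcal{F}_1$ radius $6r(3-r^{2})/((1-r^{2})(4-r^{2}))$. Once these are in hand, every remaining step transfers verbatim from the proof of Theorem~\ref{th1}, so I do not anticipate any essential obstacle beyond this bookkeeping.
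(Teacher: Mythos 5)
Your proposal is correct and follows essentially the same route as the paper's own proof: part (i) via the lower bound \eqref{g1}, monotonicity of $h$, and sharpness of $f_2$ at $z=-R$, and parts (ii)--(vii) by confining $s_f(\mathbb{D}_r)$ to the disk \eqref{f1} and invoking the same containment lemmas (parabolic, exponential, sine, lune, rational, cardioid), with the observation that $\rho(r)<c(r)-\beta$ is equivalent to $h(r)>\beta$ doing the work of collapsing each case to part (i). The paper's proof is just the verbatim execution of this plan, including the center-interval checks $[c(R),1]$ near $1$ that you flag as the remaining bookkeeping.
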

\begin{proof}
\begin{enumerate}[label=(\roman*),  leftmargin=12pt]
	
\item Let the function $f\in$ $\mathcal{F}_{2}$ and $\alpha\in [0,1)$. Let $R:=R_{\mathcal{S^{*}}(\alpha)}$
be the  smallest positive root of the  equation \eqref{1a} so that
\begin{equation}\label{th2p1}
2-8R-R^{2}+3R^{3}= \alpha(2-R)(1-R^{2}).
\end{equation}
The function
\begin{equation*}	
h(r)= \frac{2-8r-r^{2}+3r^{3}}{(2-r)(1-r^{2})}
\end{equation*}
is decreasing in $[0,1)$ and hence, for $ 0\leq r< R$, we have, using \eqref{g1} and \eqref{th2p1},
\begin{equation*}
\operatorname{Re}\left(\frac{zf'(z)}{f(z)}\right)\geq\frac{2-8r-r^{2}+3r^{3}}{(2-r)(1-r^{2})}> \frac{	2-8R-R^{2}+3R^{3}}{(2-R)(1-R^{2})}=\alpha \quad (0\leq r<R).
\end{equation*}
This proves that the function $f$ is starlike of order $\alpha $ in each disk $\mathbb{D}_{r}$ for $0\leq r< R $.
At the point $z=-R$, it can be seen, using \eqref{th2b1} and \eqref{th2p1}, that the function $f_{2}$ defined in \eqref{extremal} satisfies
\begin{equation*}
\operatorname{Re}\left( \frac{zf_{2}'(z)}{f_{2}(z)}\right)
=\frac{	2-8R-R^{2}+3R^{3}}{(2-R)(1-R^{2})}=\alpha.
\end{equation*}
This shows that the radius $R$ is the radius of starlikeness of order $\alpha $ of the class $\mathcal{F}_{2}$.
	
\item The number $R:=R_{\mathcal{{S}^{*}_{P}}}$  is the smallest  positive root  of the equation \begin{equation}\label{th2p2}
2-8R-R^{2}+3R^{3}= (1/2)(2-R)(1-R^{2}).
\end{equation}
Since the function
\[ h(r):=\frac{r(r^{3}+5r^{2}-4r-14)}{(1-r^{2})(4-r^{2})}+ \frac{4-2r^{2}}{4-r^{2}}
= \frac{2-8r-r^{2}+3r^{3}}{(2-r)(1-r^2)}\]
is decreasing  function of $r$ in [0,1), it follows that $h(r)>h(R)=1/2$ for  $0\leq r < R$ and hence, for  $0\leq r < R$, we have
\begin{align}\label{z8}
\frac{r(r^{3}+5r^{2}-4r-14)}{(1-r^{2})(4-r^{2})}+\frac{4-2r^{2}}{4-r^{2}}> \frac{1}{2}.
\end{align}
From \eqref{f1} and \eqref{z8}, we get
\begin{align*}
\left|\frac{zf'(z)}{f(z)}-\frac{4-2r^{2}}{4-r^{2}} \right|\leq\frac{4-2r^{2}}{4-r^{2}}-\frac{1}{2} \quad  (\left| z \right| \leq r).
\end{align*}
For  $0\leq r< R$, the center of the above disk $c(r)= (4-2r^{2})/(4-r^{2})$ (being a decreasing function of $r$ on [0,1]) lies in the interval $[c(R),1]\subset (c(0.2), 1] \approx (.9898991,1]\subset (1/2,3/2)$. When
$a\in (1/2,3/2)$, by \cite[Lemma 2.2]{shan}, the disk $|w-a|<a-(1/2)$ is contained in the parabolic region $|w-1|<\operatorname{Re}(w)$ and hence,  for  $0\leq r< R$,  we have
\begin{align}\label{9a}
\left|{\frac{zf'(z)}{f(z)}}-1\right|<\operatorname{Re}\left(\frac{zf'(z)}{f(z)}\right) \quad  (\left| z \right| \leq r) .
\end{align}
Thus, the radius of parabolic starlikeness of the class $\mathcal{F}_2$ is at least $R$.  To show that the radius $R$  is sharp, using \eqref{th2b1} and \eqref{th2p2}, we see  that the function $f_{2}$ defined in \eqref{extremal} satisfies
\[ \frac{zf_{2}'(z)}{f_{2}(z)} =\frac{	2-8R-R^{2}+3R^{3}}{(2-R)(1-R^{2})}=\frac{1}{2} \]
at $z=-R$ and therefore
\begin{align}
\left|{\frac{zf_{2}'(z)}{f_{2}(z)}}-1\right|=\operatorname{Re}\left({\frac{zf_{2}'(z)}{f_{2}(z)}} \right).
\end{align}

\item The number $R:=R_{\mathcal{S}^{*}_{e}}$  is the smallest  positive root  of the equation
\begin{equation}\label{th2p3}
2-8R-R^{2}+3R^{3}= (1/e)(2-R)(1-R^{2}).
\end{equation}
Since the function
\[ h(r):=\frac{r(r^{3}+5r^{2}-4r-14)}{(1-r^{2})(4-r^{2})}+ \frac{4-2r^{2}}{4-r^{2}} = \frac{2-8r-r^{2}+3r^{3}}{(2-r)(1-r^2)}\]
is decreasing  function of $r$ in [0,1), it follows that $h(r)>h(R)=1/e$ for  $0\leq r < R$ and hence, for  $0\leq r < R$, we have
\begin{align}\label{z9}
\frac{r(r^{3}+5r^{2}-4r-14)}{(1-r^{2})(4-r^{2})}+\frac{4-2r^{2}}{4-r^{2}}> \frac{1}{e}.
\end{align}
From \eqref{f1} and \eqref{z9}, we get
\begin{align*}
\left|\frac{zf'(z)}{f(z)}-\frac{4-2r^{2}}{4-r^{2}} \right|<\frac{4-2r^{2}}{4-r^{2}}-\frac{1}{e}  \quad  (\left| z \right| \leq r).
\end{align*}
For  $0\leq r< R$, the center of the above disk
$c(r)= (4-2r^{2})/(4-r^{2})$ (being a decreasing function of $r$ on [0,1])
lies in the interval $[c(R),1]\subset (c(0.2), 1]
\approx (.9898991,1]\subset (e^{-1},(e+e^{-1})/2$]. When
$a\in (e^{-1},(e+e^{-1})/2]$, by \cite[Lemma 2.2]{rajni},
the disk $|w-a|<a-e^{-1}$ is contained in the  region $|\log w|<1$ and hence,  for  $0\leq r< R$,  we have
\begin{align}\label{10a}
\left|\operatorname{log}\left(\frac{zf'(z)}{f(z)}\right) \right|< 1  \quad  (\left| z \right| \leq r).
\end{align}
Thus, the radius of exponential starlikeness of the class $\mathcal{F}_2$
is at least $R$.  To show that the radius $R$  is sharp, using \eqref{th2b1} and  \eqref{th2p3}, we see  that the function $f_{2}$ defined in \eqref{extremal} satisfies
\[ \frac{zf_{2}'(z)}{f_{2}(z)} =\frac{	2-8R-R^{2}+3R^{3}}{(2-R)(1-R^{2})}=\frac{1}{e} \]
at $z=-R$ and therefore
\begin{align}
\left|\operatorname{log}\left(\frac{zf_{2}'(z)}{f_{2}(z)}\right) \right|=1 .
\end{align}

\item The number $R:=R_{\mathcal{S}^{*}_{\sin}}$  is the smallest  positive root  of the equation
\begin{equation}\label{th2p}
 2-8R-R^{2}+3R^{3}= (1-\sin1)(2-R)(1-R^{2}).
\end{equation}
Since the function \[ h(r):=\frac{r(r^{3}+5r^{2}-4r-14)}{(1-r^{2})(4-r^{2})}+ \frac{4-2r^{2}}{4-r^{2}}
= \frac{2-8r-r^{2}+3r^{3}}{(2-r)(1-r^2)}\]
is decreasing  function of $r$ in [0,1), it follows that $h(r)>h(R)=1-\sin1$ for  $0\leq r < R$ and hence, for  $0\leq r < R$, we have
\begin{align}\label{z10}
\frac{r(r^{3}+5r^{2}-4r-14)}{(1-r^{2})(4-r^{2})}+\frac{4-2r^{2}}{4-r^{2}}> 1-\sin1.
\end{align}
From \eqref{f1} and \eqref{z10}, we get
\begin{align*}
\left|\frac{zf'(z)}{f(z)}-\frac{4-2r^{2}}{4-r^{2}} \right|\leq\frac{4-2r^{2}}{4-r^{2}}+\sin1-1  \quad  (\left| z \right| \leq r).
\end{align*}
For  $0\leq r< R$, the center of the above disk $c(r)= (4-2r^{2})/(4-r^{2})$ (being a decreasing function of $r$ on [0,1]) lies in the interval $[c(R),1]\subset (c(0.3), 1]
\approx (.976982,1]\subset (-1-\sin1, 1-\sin1)$. When $a\in (-1-\sin1, 1+\sin1)$, by \cite[Lemma 3.3]{cho},
the disk $|w-a|<\sin1-|a-1|$ is contained in the region $\varphi_{4}(\mathbb{D})$, where $\varphi_{4}(z)=1+\sin z$ and hence,  for  $0\leq r< R$, ${s_{f}}(\mathbb{D}_{r})\subset\varphi_{4}(\mathbb{D})$.
Thus, the radius of sine starlikeness of the class $\mathcal{F}_2$
is at least $R$.  To show that the radius $R$  is sharp, using \eqref{th2b1} and \eqref{th2p}, we see  that the function
$f_{2}$ defined in $\eqref{extremal}$ satisfies, at $z=-R$,
\[ \frac{zf_{2}'(z)}{f_{2}(z)} =\frac{	2-8R-R^{2}+3R^{3}}{(2-R)(1-R^{2})}=1-\sin1=\varphi_{4}(-1)\in \partial \varphi_{4}(\mathbb{D}). \]

\item The number $R:=R_{\mathcal{S}^{*}_{\leftmoon}}$  is the smallest  positive root  of the equation
 \begin{equation}\label{th2p5}
 	2-8R-R^{2}+3R^{3}= (\sqrt{2}-1)(2-R)(1-R^{2}).
 \end{equation}
 Since the function
 \[ h(r):=\frac{r(r^{3}+5r^{2}-4r-14)}{(1-r^{2})(4-r^{2})}+ \frac{4-2r^{2}}{4-r^{2}}
 = \frac{2-8r-r^{2}+3r^{3}}{(2-r)(1-r^2)}\]
  is decreasing  function of $r$ in [0,1), it follows that
  $h(r)>h(R)=\sqrt{2}-1$ for  $0\leq r < R$ and hence, for  $0\leq r < R$, we have
 \begin{align}\label{z11}
 \frac{r(r^{3}+5r^{2}-4r-14)}{(1-r^{2})(4-r^{2})}+\frac{4-2r^{2}}{4-r^{2}}> \sqrt{2}-1.
 \end{align}
   From \eqref{f1} and \eqref{z11}, we get
  \begin{align*}
  \left|\frac{zf'(z)}{f(z)}-\frac{4-2r^{2}}{4-r^{2}} \right|< \frac{4-2r^{2}}{4-r^{2}}+1-\sqrt{2}  \quad  (\left| z \right| \leq r).
\end{align*}
For  $0\leq r< R$, the center of the above disk
$c(r)= (4-2r^{2})/(4-r^{2})$ (being a decreasing function of $r$ on [0,1])
lies in the interval $[c(R),1]\subset (c(0.2), 1]
\approx (.9898991,1]\subset (\sqrt{2}$-1,$\sqrt{2}+1$). When
$a\in(\sqrt{2}-1,\sqrt{2}+1)$, by \cite[Lemma 2.1]{gandhi},
the disk $|w-a|<1-|\sqrt{2}-a|$ is contained in the  region $|w^{2}-1|<2|w|$ and
hence,  for  $0\leq r< R$,  we have
\begin{align}\label{11a}
\left|\left({\frac{zf'(z)}{f(z)}}\right)^{2}-1\right|< 2\left|\left({\frac{zf'(z)}{f(z)}}\right)\right|  \quad  (\left| z \right| \leq r).
\end{align}
  Thus, the radius of lune starlikeness of the class $\mathcal{F}_2$
  is at least $R$.  To show that the radius $R$  is sharp, using \eqref{th2b1} and \eqref{th2p5}, we see  that the function
  $f_{2}$ defined in $\eqref{extremal}$ satisfies
  \[ \frac{zf_{2}'(z)}{f_{2}(z)} =\frac{	2-8R-R^{2}+3R^{3}}{(2-R)(1-R^{2})}=\sqrt{2}-1 \]
 at $z=-R$  and therefore
  \begin{align}
  \left|\left({\frac{zf_{2}'(z)}{f_{2}(z)}}\right)^{2}-1\right|= 2\left|\left({\frac{zf_{2}'(z)}{f_{2}(z)}}\right)\right|.
  \end{align}

\item The number $R:=R_{\mathcal{S}^{*}_{R}}$  is the smallest  positive root  of the equation
\begin{equation}\label{th2p6}
 2-8R-R^{2}+3R^{3}= 2(\sqrt{2}-1)(2-R)(1-R^{2}).
\end{equation}
Since the function
\[ h(r):=\frac{r(r^{3}+5r^{2}-4r-14)}{(1-r^{2})(4-r^{2})}+ \frac{4-2r^{2}}{4-r^{2}} = \frac{2-8r-r^{2}+3r^{3}}{(2-r)(1-r^2)}\]
is decreasing  function of $r$ in [0,1), it follows that
$h(r)>h(R)=2(\sqrt{2}-1)$ for  $0\leq r < R$ and hence, for  $0\leq r < R$, we have
\begin{align}\label{z12}
  \frac{r(r^{3}+5r^{2}-4r-14)}{(1-r^{2})(4-r^{2})}+\frac{4-2r^{2}}{4-r^{2}}>2(\sqrt{2}-1)\quad(0\leq r< R).
\end{align}
From \eqref{f1} and \eqref{z12}, we get
\begin{align*}
\left|\frac{zf'(z)}{f(z)}-\frac{4-2r^{2}}{4-r^{2}} \right|<\frac{4-2r^{2}}{4-r^{2}}-2(\sqrt{2}-1)  \quad  (\left| z \right| \leq r).
\end{align*}
For  $0\leq r< R$, the center of the above disk
$c(r)= (4-2r^{2})/(4-r^{2})$ (being a decreasing function of $r$ on [0,1]) lies in the interval $[c(R),1]\subset (c(0.1), 1]
\approx (.99741,1]\subset (2(\sqrt{2}-1), \sqrt{2}]$. When $a\in  (2(\sqrt{2}-1), \sqrt{2}]$, by \cite[Lemma 2.2]{sushil},
the disk $|w-a|<a-2(\sqrt{2}-1)$ is contained in the region $\varphi_{6}(\mathbb{D})$, where $\varphi_{6}(z):=1+((zk+z^2)/(k^2-kz))$ and $k=\sqrt{2}+1$. Hence,  for  $0\leq r< R$,
${s_{f}}(\mathbb{D}_{r})\subset\varphi_{6}(\mathbb{D})$. Thus, the $\mathcal{S}^{*}_{R}$ radius of the class $\mathcal{F}_2$
is at least $R$.  To show that the radius $R$  is sharp, using \eqref{th2b1} and \eqref{th2p6}, we see  that the function $f_{2}$ defined in \eqref{extremal} satisfies, at $z=-R$,
\[ \frac{zf_{2}'(z)}{f_{2}(z)} =\frac{2-8R-R^{2}+3R^{3}}{(2-R)(1-R^{2})}=2(\sqrt{2}-1)=\varphi_{6}(-1)\in \partial \varphi_{6}(\mathbb{D}). \]

\item The number $R:=R_{\mathcal{S}^{*}_{C}}$  is the smallest  positive root  of the equation
\begin{equation}\label{th2p4}
2-8R-R^{2}+3R^{3}= (1/3)(2-R)(1-R^{2}).
\end{equation}
Since the function
\[ h(r):=\frac{r(r^{3}+5r^{2}-4r-14)}{(1-r^{2})(4-r^{2})}+ \frac{4-2r^{2}}{4-r^{2}}
= \frac{2-8r-r^{2}+3r^{3}}{(2-r)(1-r^2)}\]
is decreasing  function of $r$ in [0,1), it follows that
$h(r)>h(R)=1/3$ for  $0\leq r < R$ and hence, for  $0\leq r < R$, we have
\begin{align}\label{z13}
\frac{r(r^{3}+5r^{2}-4r-14)}{(1-r^{2})(4-r^{2})}+\frac{4-2r^{2}}{4-r^{2}}> \frac{1}{3}.
\end{align}
From \eqref{f1} and \eqref{z13}, we get
\begin{align*}
\left|\frac{zf'(z)}{f(z)}-\frac{4-2r^{2}}{4-r^{2}} \right|<\frac{4-2r^{2}}{4-r^{2}}-\frac{1}{3}  \quad  (\left| z \right| \leq r).
\end{align*}
For  $0\leq r< R$, the center of the above disk $c(r)= (4-2r^{2})/(4-r^{2})$ (being a decreasing function of $r$ on [0,1])
lies in the interval $[c(R),1]\subset (c(0.2), 1] \approx (.989899,1]\subset (1/3,5/3)$. When $a\in  (1/3,5/3)$, by \cite[Lemma 2.5]{kanika}, the disk $|w-a|<a-1/3$ is lies in the cardioid region $\varphi_{3}(\mathbb{D})$. Hence, for  $0\leq r< R$, ${s_{f}}(\mathbb{D}_{r})\subset\varphi_{3}(\mathbb{D})$. Thus, the $\mathcal{S}^{*}_{C}$ radius of the class $\mathcal{F}_2$ is at least $R$.  To show that the radius $R$ is sharp, using \eqref{th2b1} and  \eqref{th2p4}, we see  that the function $f_{2}$ defined in \eqref{extremal} satisfies, at $z=-R$, \[ \frac{zf_{2}'(z)}{f_{2}(z)}
=\frac{2-8R-R^{2}+3R^{3}}{(2-R)(1-R^{2})}=1/3=\varphi_{3}(-1)\in \partial \varphi_{3}(\mathbb{D}).\qedhere \]
\end{enumerate}
\end{proof}

\section{Radius Problem for $\mathcal{F}_{3}$}
If  the function   $f\in \mathcal{F}_{3}$,  then the function $p:\mathbb{D}\to\mathbb{C}$
defined by   $p(z)=f(z)/(z+z^2/2)$  is a function
in the class $\mathscr{P}(0)$   and
\begin{equation}\label{a2}
f(z)= p(z) (z+z^{2}/2)\quad (z\in \mathbb{D}).
\end{equation}
From \eqref{a2}, it follows that
\begin{equation}\label{b2}
{\frac{zf'(z)}{f(z)}=\frac{zp'(z)}{p(z)}+\frac{2(z+1)}{z+2}}.
\end{equation}
Using  \eqref{e} (with $\alpha=0$) and \eqref{c} in \eqref{b2},  we see that the image of the disk $\mathbb{D}_r$ under the mapping $zf'(z)/f(z)$  is contained in the disk
\begin{equation}\label{f2}
\left| \frac{zf'(z)}{f(z)}-\frac{4-2r^{2}}{4-r^{2}}\right|\leq \frac{2r(5-2r^{2})}{(1-r^{2})(4-r^{2})} \quad  (\left| z \right| \leq r).
\end{equation}
From \eqref{f2}, it readily follows that
\begin{equation}\label{g2}
\operatorname{Re} \left( \frac{zf'(z)}{f(z)}\right)\geq \frac{4-2r^{2}}{4-r^{2}}-\frac{2r(5-2r^{2})}{(1-r^{2})(4-r^{2})} =  \frac{2(1-3r+r^{3})}{(2-r)(1-r^{2})}  \quad  (\left| z \right| \leq r).
\end{equation}
Let  $R_{\mathcal{S}^{*}}\approx  0.3473$ be the unique zero in (0,1)
of the polynomial $r^{3}-3r+1$. Then,
for every function $f\in \mathcal{F}_{3}$,  the inequality \eqref{g1} shows that
$\operatorname{Re}(s_{f}(z))>0$ in
each disk $\mathbb{D}_{r}$, for $0\leq r< R_{\mathcal{S}^{*}}$.
For the function $f_{3}$ defined in \eqref{extremal1}, we have
\begin{equation}\label{th3b1}
s_{f_{3}}(z)= \frac{zf_{3}'(z)}{f_{3}(z)} =\frac{2(1+3z-z^3)}{(2+z)(1-z^{2})}
\end{equation}
and hence  $\operatorname{Re}(s_{f_{3}}(z))$ vanishes at $z=-R_{\mathcal{S}^{*}}$.
Thus, the radius of starlikeness  $R_{\mathcal{S}^{*}}$ for the class
$\mathcal{F}_{3}$ is the unique zero in (0,1)  of the polynomial
$P_{3}$ defined in \eqref{star3} and is the same as the radius
of univalence  $R_{\mathcal{S}}$. Using the inequality \eqref{f2}, we now determine     ${\mathcal{S^{*}}(\alpha)}$,
${\mathcal{{S}^{*}_{L}}}$, ${\mathcal{S}^{*}_{P}}$,
${\mathcal{S}^{*}_{e}}$, ${\mathcal{S}_{c}^{*}}$, ${\mathcal{S}_{sin}^{*}}$,
${\mathcal{S}_{\leftmoon}^{*}}$ and ${\mathcal{S}_{R}^{*}} $
radii  for the class $\mathcal{F}_{3}$.

\begin{theorem}\label{th3}
 The following sharp radii results hold for the class of function  $\mathcal{F}_{3}$:
\begin{enumerate}[label=(\roman*)]
\item For any  $0\leq \alpha<1$, the radius $R_{\mathcal{S^{*}}(\alpha)}$ is the smallest positive root  of the  polynomial \begin{equation}\label{1b} (2-\alpha)r^{3}+(2\alpha)r^{2}+(\alpha-6)r+2-2\alpha=0 .
\end{equation}
\item The radius $R_{\mathcal{{S}^{*}_{L}}}$ ($\approx 0.1645$) is the smallest positive root  of the  polynomial
\begin{equation}\label{2b}
(\sqrt{2}-2)r^{3}+(2\sqrt{2})r^{2}+(6-\sqrt{2})r+2-2\sqrt{2}=0.
\end{equation}
\item The radius $R_{\mathcal{S}^{*}_{p}}$ ($\approx 0.19028)$ is the same as  $R_{\mathcal{S}^{*}(1/2)}$.

\item The radius $R_{\mathcal{S}^{*}_{e}}$ ($ \approx 0.2355)$ is the same as  $R_{\mathcal{S}^{*}(1/e)}$.

\item The radius $R_{\mathcal{S}_{\sin}^{*}}$ ($ \approx 0.3017)$ is the same as  $R_{\mathcal{S}^{*}(1-\sin1)}$.

\item The radius $R_{\mathcal{S}_{\leftmoon}^{*}}$ ($\approx 0.2199)$ is the same as $R_{\mathcal{S}^{*}(\sqrt{2}-1)}$.

\item The radius $R_{\mathcal{S}_{R}^{*}}$ ($\approx 0.0679$) is  the same as $R_{\mathcal{S}^{*}(2(\sqrt{2}-1))}$.

\item The radius $R_{\mathcal{S}_{c}^{*}}$ ($ \approx 0.2469)$ is the same as  $R_{\mathcal{S}^{*}(1/3)}$.

\end{enumerate}

\end{theorem}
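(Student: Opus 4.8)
The plan is to follow verbatim the template of the proofs of Theorems~\ref{th1} and~\ref{th2}, now feeding in the single disk containment \eqref{f2} and the real-part bound \eqref{g2} valid for every $f\in\mathcal{F}_3$. The two rational functions driving every part are the lower and upper edges of the disk in \eqref{f2},
\[
h_-(r)=\frac{2(1-3r+r^3)}{(2-r)(1-r^2)},\qquad
h_+(r)=\frac{2(1+3r-r^3)}{(2+r)(1-r^2)},
\]
obtained as $c(r)\mp\rho(r)$, where $c(r)=(4-2r^2)/(4-r^2)$ is the center and $\rho(r)=2r(5-2r^2)/((1-r^2)(4-r^2))$ the radius; a direct computation shows that $h_-$ is decreasing and $h_+$ is increasing on $[0,1)$. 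For each of the eight regions the argument is: pick the correct edge, solve the edge-equation for the smallest root $R\in(0,1)$, verify that the moving center $c(r)$ stays in the admissible interval of the relevant containment lemma for $0\le r<R$, and then read off the conclusion; sharpness in every case follows by evaluating $s_{f_3}$ from \eqref{th3b1} at the appropriate boundary point.

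Concretely, for part (i) I would take $R$ to be the smallest positive root of \eqref{1b}, which is precisely the solution of $h_-(R)=\alpha$ (clearing denominators in $h_-(R)=\alpha$ returns \eqref{1b} exactly), and combine monotonicity of $h_-$ with \eqref{g2} to get $\operatorname{Re}(s_f(z))>\alpha$ on $\mathbb{D}_R$, with equality attained by $f_3$ at $z=-R$. Part (ii) is the lone case using the upper edge: solving $h_+(R)=\sqrt{2}$ gives exactly \eqref{2b}, and since $c(r)$ stays in $[2\sqrt{2}/3,\sqrt{2})$ on $[0,R)$, \cite[Lemma 2.2]{lemniscate} places the image disk inside $|w^2-1|<1$, the bound being sharp at $z=R$ where $s_{f_3}=\sqrt{2}$. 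Parts (iii)--(viii) reuse the lower edge $h_-$ exactly as in Theorem~\ref{th1}: setting $h_-(R)$ equal to $1/2$, $1/e$, $1-\sin1$, $\sqrt{2}-1$, $2(\sqrt{2}-1)$ and $1/3$ yields the stated radii, and in each case I would invoke the matching lemma---\cite[Lemma 2.2]{shan} for the parabola, \cite[Lemma 2.2]{rajni} for $|\log w|<1$, \cite[Lemma 3.3]{cho} for $1+\sin z$, \cite[Lemma 2.1]{gandhi} for the lune, \cite[Lemma 2.2]{sushil} for $\varphi_6$, and \cite[Lemma 2.5]{kanika} for the cardioid---after checking that $c(r)$ lies in that lemma's admissible interval, with sharpness obtained from \eqref{th3b1} at $z=-R$.

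The only genuine content, and the main thing to get right, is the algebra linking each edge-equation to the cubic named in the statement together with the verification that the named root is the smallest in $(0,1)$; this is a finite deterministic check repeated eight times. The interval bookkeeping for $c(r)$ is routine: $c$ decreases from $c(0)=1$, and since the largest root in play (the sine case, $R\approx0.30$) still leaves $c(R)$ close to $1$, every $c(R)$ sits comfortably inside the required interval, exactly as in the earlier theorems. I therefore anticipate no real obstacle beyond careful but elementary computation.
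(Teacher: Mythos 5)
Your proposal is correct and follows essentially the same route as the paper: the lower edge $h_-(r)=2(1-3r+r^3)/((2-r)(1-r^2))$ with its monotonicity drives parts (i) and (iii)--(viii) via the respective containment lemmas, the upper edge $h_+$ handles the lemniscate case (ii), and sharpness comes from evaluating $s_{f_3}$ at $z=\mp R$. Your placement of the lemniscate sharpness point at $z=R$ is in fact the consistent choice (the paper writes ``at $z=-R$'' there but evaluates the expression corresponding to $z=R$), so no changes are needed.
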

\begin{proof}
\begin{enumerate}[label=(\roman*),  leftmargin=12pt]
\item Let the function $f \in \mathcal{F}_{3}$ and $\alpha$ in [0,1). The root  $ R:=R_{\mathcal{S^{*}}(\alpha)}$ be the smallest positive root of the equation \eqref{1b} so that
\begin{equation}\label{th3p1}
2(1-3R+R^{3})=\alpha(2-R)(1-R^{2}).
\end{equation}
The function
\[h(r):=\frac{2(1-3r+r^{3})}{(2-r)(1-r^{2})}\]
is decreasing in $[0,1)$ and hence, for $ 0\leq r< R$,
we have, using \eqref{g2} and \eqref{th3p1},
\begin{equation*}
\operatorname{Re}\left(\frac{zf'(z)}{f(z)}\right)\geq\frac{2(1-3r+r^{3})}{(2-r)(1-r^{2})}>\frac{2(1-3R+R^{3})}{(2-R)(1-R^{2})}=\alpha.
\end{equation*}
This proves that the function $f$ is starlike of order $\alpha $
in each disk $\mathbb{D}_{r}$ for $0\leq r< R $.
At the point $z=-R$, it can be seen, using \eqref{th3b1} and  \eqref{th3p1},  that
the function $f_{3}$ defined in $\eqref{extremal1}$ satisfies
\begin{equation*}
\operatorname{Re}\left( zf_{3}'(z)/f_{3}(z)\right)=\frac{2(1-3R-R^3)}{(2-R)(1-R^{2})} =\alpha.
\end{equation*}
This shows that the radius $R$  is the  radius of
starlikeness of order $\alpha $ of the class $\mathcal{F}_{3}$.

\item Let $R:=R_{\mathcal{{S}^{*}_{L}}}$  be the smallest  positive root  of the equation \eqref{2b} so that
\begin{equation}\label{th3p2}
2(1+3R-R^{3})=\sqrt{2}(2+R)(1-R^{2}).
\end{equation}
Since the function
\[ h(r):=\frac{2r(5-2r^{2})}{(1-r^{2})(4-r^{2})}+\frac{4-2r^{2}}{4-r^{2}}=\frac{2(1+3r-r^{3})}{(1-r^{2})(2+r)}\]
is an increasing  function of $r$ in [0,1), it follows that
$h(r)<h(R)=\sqrt{2}$ for  $0\leq r < R$ and hence, for  $0\leq r < R$, we have
\begin{equation}\label{za}
 \frac{2r(5-2r^{2})}{(1-r^{2})(4-r^{2})}<\sqrt{2}-\frac{4-2r^{2}}{4-r^{2}}.
\end{equation}
From \eqref{f2} and \eqref{za}, we get
\begin{equation*}
\left|\frac{zf'(z)}{f(z)}-\frac{4-2r^{2}}{4-r^{2}} \right|<\sqrt{2}-\frac{4-2r^{2}}{4-r^{2}}  \quad  (\left| z \right| \leq r).
\end{equation*}
For  $0\leq r< R$, the center of the above disk
$c(r)= (4-2r^{2})/(4-r^{2})$ (being a decreasing function of $r$ on [0,1])
lies in the interval $[c(R),1]\subset (c(0.1), 1]
\approx (.997494,1]\subset [2\sqrt{2}/3,\sqrt{2})$. When
$a\in [2\sqrt{2}/3,\sqrt{2})$, by \cite[Lemma 2.2]{lemniscate}, the disk $|w-a|<\sqrt{2}-a$ is contained in the lemniscate region $|w^2-1|<1$ and
hence,  for  $0\leq r< R$,  we have
\begin{equation}\label{s1}
\left|\left({\frac{zf'(z)}{f(z)}}\right)^{2}-1\right|< 1.
\end{equation}
Thus, the radius of lemniscate starlikeness of the class $\mathcal{F}_3$
is at least $R$.  To show that the radius $R$ is sharp, using \eqref{th3b1} and \eqref{th3p2}, we see  that the function $f_{3}$ defined in $\eqref{extremal1}$ satisfies
\[ \frac{zf_{3}'(z)}{f_{3}(z)} =\frac{2(1+3R-R^{3})}{(1-R^{2})(2+R)}=\sqrt{2} \] at $z=-R$  and therefore
\begin{equation}
   \left|\left({\frac{zf_{3}'(z)}{f_{3}(z)}}\right)^{2}-1\right|= 1.
\end{equation}

\item The number $R:=R_{\mathcal{{S}^{*}_{P}}}$ is the smallest  positive root  of the equation
\begin{equation}\label{th3p3}
  2(1-3R+R^{3})=(1/2)(2-R)(1-R^{2}).
\end{equation}
Since the function
\[ h(r):=\frac{2r(2r^{2}-5)}{(1-r^{2})(4-r^{2})}+\frac{4-2r^{2}}{4-r^{2}}=\frac{2(1-3r+r^{3})}{(1-r^{2})(2-r)}\]
is decreasing  function of $r$ in [0,1), it follows that,
$h(r)>h(R)=1/2$ for  $0\leq r < R$ and hence, for  $0\leq r < R$, we have
\begin{equation}\label{zh}
\frac{2r(5-2r^{2})}{(1-r^{2})(4-r^{2})}<\frac{4-2r^{2}}{4-r^{2}}-\frac{1}{2}.
\end{equation}
From \eqref{f2} and \eqref{zh},  we get
\begin{align*}
\left|\frac{zf'(z)}{f(z)}-\frac{4-2r^{2}}{4-r^{2}} \right|<\frac{4-2r^{2}}{4-r^{2}}-\frac{1}{2}  \quad  (\left| z \right| \leq r).
\end{align*}
For  $0\leq r< R$, the center of the above disk
$c(r)= (4-2r^{2})/(4-r^{2})$ (being a decreasing function of $r$ on [0,1])
lies in the interval $[c(R),1]\subset (c(0.2), 1]
\approx (.9898991,1]\subset (1/2,3/2)$. When
$a\in (1/2,3/2)$, by \cite[Lemma 2.2]{shan},
the disk $|w-a|<a-(1/2)$ is contained in the parabolic region $|w-1|<\operatorname{Re}(w)$ and
hence,  for  $0\leq r< R$,  we have
\begin{align}\label{9b}
\left|{\frac{zf'(z)}{f(z)}}-1\right|<\operatorname{Re}\left(\frac{zf'(z)}{f(z)}\right) \quad  (\left| z \right| \leq r).
\end{align}
Thus, the radius of parabolic starlikeness of the class $\mathcal{F}_3$
is at least $R$.  To show that the radius $R$  is sharp, using \eqref{th3b1} and \eqref{th3p3}, we see  that the function $f_{3}$ defined in \eqref{extremal1} satisfies
\[ \frac{zf_{3}'(z)}{f_{3}(z)} =\frac{2(R^{3}-3R+1)}{(1-R^{2})(2-R)}=\frac{1}{2} \] at $z=-R$  and therefore
\begin{align}
\left|{\frac{zf_{3}'(z)}{f_{3}(z)}}-1\right|=\frac{1}{2}=\operatorname{Re}\left({\frac{zf_{3}'(z)}{f_{3}(z)}} \right).
\end{align}

\item The number $R:=R_{\mathcal{S}^{*}_{e}}$ is the smallest  positive root  of the equation
\begin{equation}\label{th3p4}
 2(1-3R+R^{3})=(1/e)(2-R)(1-R^{2}).
\end{equation}
Since the function
\[ h(r):=\frac{2r(2r^{2}-5)}{(1-r^{2})(4-r^{2})}+\frac{4-2r^{2}}{4-r^{2}}=\frac{2(1-3r+r^{3})}{(1-r^{2})(2-r)}\]
is decreasing  function of $r$ in [0,1), it follows that
$h(r)>h(R)=1/e$ for  $0\leq r < R$ and hence, for  $0\leq r < R$, we have
\begin{equation}\label{zc}
\frac{2r(5-2r^{2})}{(1-r^{2})(4-r^{2})}<\frac{4-2r^{2}}{4-r^{2}}-\frac{1}{e}.
\end{equation}
From \eqref{f2} and \eqref{zc}, we get
\begin{align*}
\left|\frac{zf'(z)}{f(z)}-\frac{4-2r^{2}}{4-r^{2}} \right|<\frac{4-2r^{2}}{4-r^{2}}-\frac{1}{e}  \quad  (\left| z \right| \leq r).
\end{align*}
For  $0\leq r< R$, the center of the above disk
$c(r)= (4-2r^{2})/(4-r^{2})$ (being a decreasing function of $r$ on [0,1]) lies in the interval $[c(R),1]\subset (c(0.3), 1] \approx (.976982,1]\subset (e^{-1},(e+e^{-1})/2$]. When
$a\in (e^{-1},(e+e^{-1})/2]$, by \cite[Lemma 2.2]{rajni},
the disk $|w-a|<a-e^{-1}$ is contained in the  region $|\log w|<1$ and hence, for  $0\leq r< R$,  we have
\begin{align}\label{10b}
\left|\operatorname{log}\left(\frac{zf'(z)}{f(z)}\right) \right|<1  \quad  (\left| z \right| \leq r).
\end{align}
Thus, the radius of exponential starlikeness of the class $\mathcal{F}_3$
is at least $R$.  To show that the radius $R$  is sharp, using \eqref{th3b1} and \eqref{th3p4}, we see  that the function
$f_{3}$ defined in \eqref{extremal1} satisfies
\[ \frac{zf_{3}'(z)}{f_{3}(z)} =\frac{2(1-3R+R^{3})}{(1-R^{2})(2-R)}=\frac{1}{e} \] at $z=-R$  and therefore
\begin{align}
\left|\operatorname{log}\left(\frac{zf_{3}'(z)}{f_{3}(z)}\right) \right|=1.
\end{align}

\item The number $R:=R_{\mathcal{S}^{*}_{\sin}}$  is the smallest  positive root  of the equation
\begin{equation}\label{th3p5}
2(1-3R+R^{3})=(1-\sin1)(2-R)(1-R^{2}).
\end{equation}
Since the function
\[ h(r):=\frac{2r(2r^{2}-5)}{(1-r^{2})(4-r^{2})}+\frac{4-2r^{2}}{4-r^{2}}=\frac{2(1-3r+r^{3})}{(1-r^{2})(2-r)}\]
is decreasing  function of $r$ in [0,1), it follows that
$h(r)>h(R)=1-\sin1$ for  $0\leq r < R$ and hence, for  $0\leq r < R$, we have
\begin{equation}\label{zd}
\frac{2r(5-2r^{2})}{(1-r^{2})(4-r^{2})}<\frac{4-2r^{2}}{4-r^{2}}+\sin1-1.
\end{equation}
From \eqref{f2} and \eqref{zd}, get
\begin{align*}
\left|\frac{zf'(z)}{f(z)}-\frac{4-2r^{2}}{4-r^{2}} \right|<\sin1-1+\frac{4-2r^{2}}{4-r^{2}}  \quad  (\left| z \right| \leq r).
\end{align*}
For  $0\leq r< R$, the center of the above disk
$c(r)= (4-2r^{2})/(4-r^{2})$ (being a decreasing function of $r$ on [0,1])
lies in the interval $[c(R),1]\subset (c(0.4), 1]
\approx (.995833,1]\subset (-1-\sin1, 1-\sin1)$. When
$a\in (-1-\sin1, 1+\sin1)$, by \cite[Lemma 3.3]{cho},
the disk $|w-a|<\sin1-|a-1|$ is contained in the region $\varphi_{4}(\mathbb{D})$, where $\varphi_{4}(z)=1+\sin z$ and
hence,  for  $0\leq r< R$, ${s_{f}}(\mathbb{D}_{r})\subset\varphi_{4}(\mathbb{D})$.
Thus, the radius of sine starlikeness of the class $\mathcal{F}_3$
is at least $R$.  To show that the radius $R$  is sharp, using \eqref{th3b1} and \eqref{th3p5}, we see  that the function $f_{3}$ defined in \eqref{extremal1} satisfies, at $z=-R$,
\[ \frac{zf_{3}'(z)}{f_{3}(z)} =\frac{2(1-3R+R^{3})}{(2+R)(1-R^{2})}=1-\sin1=\varphi_{4}(-1)\in \partial \varphi_{4}(\mathbb{D}). \]

\item The number $R:=R_{\mathcal{S}^{*}_{\leftmoon}}$ is the smallest  positive root  of the equation
\begin{equation}\label{th3p6}
2(1-3R+R^{3})=(\sqrt{2}-1)(2-R)(1-R^{2}).
\end{equation}
Since the function
\[ h(r):=\frac{2r(2r^{2}-5)}{(1-r^{2})(4-r^{2})}+\frac{4-2r^{2}}{4-r^{2}}=\frac{2(1-3r+r^{3})}{(1-r^{2})(2-r)}\]
 is decreasing  function of $r$ in [0,1), it follows that
 $h(r)>h(R)=\sqrt{2}-1$ for  $0\leq r < R$ and hence, for  $0\leq r < R$, we have
\begin{equation}\label{ze}
\frac{2r(5-2r^{2})}{(1-r^{2})(4-r^{2})}<\frac{4-2r^{2}}{4-r^{2}}+1-\sqrt{2}.
\end{equation}
 From \eqref{f2} and \eqref{ze}, we get
\begin{align*}
\left|\frac{zf'(z)}{f(z)}-\frac{4-2r^{2}}{4-r^{2}} \right|<\frac{4-2r^{2}}{4-r^{2}}+1-\sqrt{2}  \quad  (\left| z \right| \leq r).
\end{align*}
For  $0\leq r< R$, the center of the above disk
 $c(r)= (4-2r^{2})/(4-r^{2})$ (being a decreasing function of $r$ on [0,1])
 lies in the interval $[c(R),1]\subset (c(0.3), 1]
\approx (.976982,1]\subset (\sqrt{2}$-1,$\sqrt{2}+1$). When
$a\in(\sqrt{2}-1,\sqrt{2}+1)$, by \cite[Lemma 2.1]{gandhi},
the disk $|w-a|<1-|\sqrt{2}-a|$ is contained in the  region $|w^{2}-1|<2|w|$ and hence,  for  $0\leq r< R$,  we have
\begin{align}\label{11b}
\left|\left({\frac{zf'(z)}{f(z)}}\right)^{2}-1\right|< 2\left|\left({\frac{zf'(z)}{f(z)}}\right)\right|  \quad  (\left| z \right| \leq r).
\end{align}
Thus, the radius of lune starlikeness of the class $\mathcal{F}_3$ is at least $R$.  To show that the radius $R$  is sharp, using \eqref{th3b1} and \eqref{th3p6}, we see  that the function $f_{3}$ defined in \eqref{extremal1} satisfies
\[ \frac{zf_{3}'(z)}{f_{3}(z)} =\frac{2(1-3R+R^{3})}{(1-R^{2})(2-R)}=\sqrt{2}-1 \]
at $z=-R$  and therefore
\begin{align}
\left|\left({\frac{zf_{3}'(z)}{f_{3}(z)}}\right)^{2}-1\right|= 2\left|\left({\frac{zf_{3}'(z)}{f_{3}(z)}}\right)\right|.
\end{align}

\item The number $R:=R_{\mathcal{S}^{*}_{R}}$  is the smallest  positive root  of the equation
\begin{equation}\label{th3p7}
2(1-3R+R^{3})=2(\sqrt{2}-1)(2-R)(1-R^{2}).
\end{equation}
Since the function
  \[ h(r):=\frac{2r(2r^{2}-5)}{(1-r^{2})(4-r^{2})}+\frac{4-2r^{2}}{4-r^{2}}=\frac{2(1-3r+r^{3})}{(1-r^{2})(2-r)}\]
is decreasing  function of $r$ in [0,1), it follows that
$h(r)>h(R)=2(\sqrt{2}-1)$ for  $0\leq r < R$ and hence, for  $0\leq r < R$, we have
\begin{equation}\label{zf}
\frac{2r(5-2r^{2})}{(1-r^{2})(4-r^{2})}<\frac{4-2r^{2}}{4-r^{2}}-2(\sqrt{2}-1).
\end{equation}
From \eqref{f2} and \eqref{zf}, we get
\begin{align*}
\left|\frac{zf'(z)}{f(z)}-\frac{4-2r^{2}}{4-r^{2}} \right|<\frac{4-2r^{2}}{4-r^{2}}-2(\sqrt{2}-1)  \quad  (\left| z \right| \leq r).
\end{align*}
For  $0\leq r< R$, the center of the above disk $c(r)= (4-2r^{2})/(4-r^{2})$ (being a decreasing function of $r$ on [0,1])
lies in the interval $[c(R),1]\subset (c(0.1), 1] \approx (.99741,1]\subset (2(\sqrt{2}-1), \sqrt{2}]$. When
$a\in  (2(\sqrt{2}-1), \sqrt{2}]$, by \cite[Lemma 2.2]{sushil},
the disk $|w-a|<a-2(\sqrt{2}-1)$ is contained in the region $\varphi_{6}(\mathbb{D})$, where $\varphi_{6}(z):=1+((zk+z^2)/(k^2-kz))$ and $k=\sqrt{2}+1$.
Hence,  for  $0\leq r< R$, ${s_{f}}(\mathbb{D}_{r})\subset\varphi_{6}(\mathbb{D})$.
Thus, the $\mathcal{S}^{*}_{R}$ radius of the class $\mathcal{F}_3$
is at least $R$.  To show that the radius $R$  is sharp, using \eqref{th3b1} and \eqref{th3p6}, we see  that the function
$f_{3}$ defined in \eqref{extremal1} satisfies, at $z=-R$,
\[ \frac{zf_{3}'(z)}{f_{3}(z)} =\frac{2(1-3R+R^{3})}{(2-R)(1-R^{2})}=2(\sqrt{2}-1)=\varphi_{6}(-1)\in \partial \varphi_{6}(\mathbb{D}). \]

\item The number $R:=R_{\mathcal{S}^{*}_{C}}$ is the smallest  positive root  of the equation
\begin{equation}\label{th3p8}
2(1-3R+R^{3})=(1/3)(2-R)(1-R^{2}).
\end{equation}
Since the function
\[ h(r):=\frac{2r(2r^{2}-5)}{(1-r^{2})(4-r^{2})}+\frac{4-2r^{2}}{4-r^{2}}=\frac{2(1-3r+r^{3})}{(1-r^{2})(2+r)}\] is decreasing  function of $r$ in [0,1), it follows that $h(r)>h(R)=1/3$ for  $0\leq r < R$ and hence, for  $0\leq r < R$, we have
\begin{equation}\label{zb}
\frac{2r(5-2r^{2})}{(1-r^{2})(4-r^{2})}<\frac{4-2r^{2}}{4-r^{2}}-\frac{1}{3}.
\end{equation}
From \eqref{f2} and\eqref{zb}, we get
\begin{align*}
\left|\frac{zf'(z)}{f(z)}-\frac{4-2r^{2}}{4-r^{2}} \right|<\frac{4-2r^{2}}{4-r^{2}}-\frac{1}{3}  \quad  (\left| z \right| \leq r).
\end{align*}
For  $0\leq r< R$, the center of the above disk
$c(r)= (4-2r^{2})/(4-r^{2})$ (being a decreasing function of $r$ on [0,1])
lies in the interval $[c(R),1]\subset (c(0.3), 1]
\approx (.976982,1]\subset (1/3,5/3)$. When
$a\in  (1/3,5/3)$, by \cite[Lemma 2.5]{kanika},
the disk $|w-a|<a-1/3$ is lies in the cardioid region $\varphi_{3}(\mathbb{D})$.
Hence,  for  $0\leq r< R$,
${s_{f}}(\mathbb{D}_{r})\subset\varphi_{3}(\mathbb{D})$.
Thus, the radius of the class $\mathcal{F}_3$
is at least $R$.  To show that the radius $R$  is sharp, using \eqref{th3b1} and \eqref{th3p8}, we see  that the function
$f_{3}$ defined in \eqref{extremal} satisfies, at $z=-R$,
\[ \frac{zf_{3}'(z)}{f_{3}(z)} =\frac{2(1-3R+R^{3})}{(2-R)(1-R^{2})}=1/3=\varphi_{3}(-1)\in \partial \varphi_{3}(\mathbb{D}). \qedhere\]
\end{enumerate}
\end{proof}

\end{document}